\documentclass{article}
\usepackage{harvard}
\usepackage{graphicx}
\usepackage{camnum}
\usepackage{amsmath}
\usepackage{enumerate}
\usepackage{algorithm}
\usepackage{algpseudocode}
\usepackage{tikz,pgfplots}
\usepackage{tikz-cd}

\makeatletter
\newsavebox{\@brx}
\newcommand{\llangle}[1][]{\savebox{\@brx}{\(\m@th{#1\langle}\)}%
	\mathopen{\copy\@brx\kern-0.5\wd\@brx\usebox{\@brx}}}
\newcommand{\rrangle}[1][]{\savebox{\@brx}{\(\m@th{#1\rangle}\)}%
	\mathclose{\copy\@brx\kern-0.5\wd\@brx\usebox{\@brx}}}
\makeatother

\allowdisplaybreaks

\newcommand{\ee}{\mathrm{e}}
\newcommand{\ii}{{\mathrm i}}
\newcommand{\DDD}{\DD{D}}

\newtheorem{definition}[theorem]{Definition}

\title{T-systems: a theory of orthonormal functions with a tridiagonal differentiation matrix}
\author{Arieh Iserles\\
	Department of Applied Mathematics and Theoretical Physics\\
	University of Cambridge
	\and 
	Marcus Webb\\
	Department of Mathematics\\University of Manchester}

\begin{document}
\maketitle
\tableofcontents
\newpage
\begin{abstract}
  The starting point of this paper is that a spectral method is essentially a combination of an orthonormal basis of the underlying Hilbert space with Galerkin conditions. The choice of an orthonormal basis depends on a number of desirable features which we explore in the context of spectral methods for time-dependent partial differential equations in a single space dimension.
  
  A central role in ensuring many of the above features is played by the {\em differentiation matrix\/} of the underlying orthonormal system. In particular, it is beneficial if this matrix is skew-symmetric and tridiagonal. While  orthonormal systems with this feature have been characterised in A.~Iserles \&~M.~Webb, ``Orthogonal systems with a skew-symmetric differentiation matrix'', {\em Found.\ Comput.\ Maths\/} {\bf 19} (2019), 1191--1221, employing Fourier transforms, in this paper we provide an alternative characterisation using the {\em differential Lanczos algorithm,\/} which can be implemented constructively. It is valid for inner products that obey an `integration-by-parts condition', inclusive of $\CC{L}_2$ and Sobolev norms on the real line. 

Motivated by quest for integration methods that conserve Hamiltonian energy, we conclude the paper replacing inner products by more general sesquilinear forms and presenting preliminary results. Here the Fourier transform characterisation generalises to spectral theory of Schr\"odinger operators and the differential Lanczos algorithm generalises to the differential Arnoldi algorithm.
\end{abstract}

\section{Introduction}

\subsection{Spectral methods for time-dependent PDEs}

The original motivation for the present work is the design of numerical methods for time-dependent dispersive equations of the form
\begin{equation}
  \label{eq:1.1}
  \ii \frac{\partial u}{\partial t}=-\Delta u+G(\MM{x},t,u)u,\qquad t\geq0,\quad \MM{x}\in\BB{R}^d,
\end{equation}
where $u:\BB{R}^d\times[ 0,\infty)\rightarrow\BB{C}$ and we are given a sufficiently regular initial value $u_0$ at $t=0$. The given function $G : \BB{R}^d \times [0,\infty) \times \BB{C} \to \BB{R}$ is also assumed to be sufficiently regular. Important examples, all with significant applications in quantum mechanics, include
\begin{itemize}
\item $G(\MM{x},t,u)=V(\MM{x})$: the linear Schr\"odinger equation,
\item $G(\MM{x},t,u)=\lambda |u|^2$: the nonlinear Schr\"odinger equation, and
\item $G(\MM{x},t,u)=V(\MM{x},t)+\lambda|u|^2$: the Gross--Pitaevskii equation.
\end{itemize}
It is well known that the evolution of \R{eq:1.1} is unitary, i.e.~$\|u(\,\cdot\,,t)\|_{\CC{L}_2(\bb{R})}\equiv \|u_0\|_{\CC{L}_2(\bb{R})}$, a property of central importance in physical applications, because it corresponds to the fact that $|u(\,\cdot\,,t)|^2$ is a probability distribution. It is thus, in the spirit of geometric numerical integration \cite{blanes16cig,hairer06gni}, natural to respect it under discretisation. An added bonus is that a unitary discretisation is automatically stable in the $\CC{L}_2(\BB{R})$ norm and hence by the Lax Equivalence Theorem \cite[p.~360]{iserles09fcn}, if the numerical method is also consistent, then it is convergent. 

The equation \R{eq:1.1}, defined for $\MM{x}\in\BB{R}^d$, represents a natural setting for both analysis and computation of dispersive equations of quantum mechanics \cite{lasser20cqd}. It is often presented in different spatial domains, in particular on the torus $\BB{T}^d$, but this is often motivated by the quest for simpler analysis and faster numerical solution, both through the agency of Fourier series and the Fast Fourier Transform, rather than by genuine physical considerations. 

The `free space' setting, $\BB{R}^d$, presents considerable challenges for numerical discretisations. Naive use of finite differences or finite elements leads to a countable number of degrees of freedom. Restricting the domain to a sufficiently large box (with either zero Dirichlet or periodic boundary conditions), but how large is `large enough'?\footnote{Cf.\ a preliminary dicussion of this approach in \cite{iserles24cmw}.} This, in particular, is highly relevant to dispersive equations: in general we can say very little about the support of the solution as time evolves. 

An important exception to the principle of ``free space setting results in an infinite number of degrees of freedom'' is presented by {\em spectral methods.\/} While more familiar for pure boundary-value problems, spectral methods can be used also for time-dependent PDEs of the form
\begin{equation}
  \label{eq:1.2}
  \frac{\partial u}{\partial t}=\mathcal{L}[u]+f(\MM{x},t, u),\qquad (\MM{x},t)\in\Omega\times\BB{R}_+,
\end{equation}
where $\Omega\subseteq\BB{R}^d$, $f$ is a sufficiently regular function and $\mathcal{L}$ is a possibly nonlinear differential operator defined on a function space with appropriate boundary conditions on $\partial\Omega$ \cite{hesthaven07smt}. Let $\Phi=\{\varphi_n\}_{n\in\bb{Z}_+}$ be a sufficiently smooth orthonormal basis of $\CC{L}_2(\Omega)$,
\begin{displaymath}
  \langle \varphi_m,\varphi_n\rangle=\int_\Omega \varphi_m(\MM{x})\varphi_n(\MM{x})\D\MM{x}=\delta_{m,n},\qquad m,n\in\BB{Z}_+,
\end{displaymath}
and suppose that each $\varphi_n$ shares the same boundary conditions as \R{eq:1.2}. A spectral method approximates the solution as a truncated expansion in the basis $\Phi$,
\begin{equation}
  \label{eq:1.3}
  u(\MM{x},t)\approx u_N(\MM{x},t)=\sum_{n=0}^N \hat{u}_n(t) \varphi_n(\MM{x}).
\end{equation}

To determine the coefficients $\hat{u}_n$ we may use  Galerkin conditions which constrain the residual of the numerical solution to be orthogonal to the approximation space spanned by $\varphi_0,\ldots,\varphi_N$. By orthonormality of $\Phi$, this is equivalent to
\begin{equation}
  \label{eq:1.4}
  \frac{\D\hat{u}_m}{\D t}=\left\langle \mathcal{L}\sum_{n=0}^N \hat{u}_n\varphi_n+f,\varphi_m\right\rangle\!,\qquad m=0,\ldots,N.
\end{equation}
Note that, if $\mathcal{L}$ is linear, \R{eq:1.4} simplifies to
\begin{displaymath}
  \frac{\D\hat{u}_m}{\D t}=\sum_{n=0}^N \hat{u}_n \langle \mathcal{L}\varphi_n,\varphi_m\rangle+\langle f,\varphi_m\rangle=0,\qquad m=0,\ldots,N,
\end{displaymath}
which, for $f=f(\MM{x},t)$, is a linear ODE system. The initial conditions for \R{eq:1.4} can be obtained by expanding the original initial condition in the basis $\Phi$,
\begin{displaymath}
  \hat{u}_n=\int_\Omega u(\MM{x},0)\overline{\varphi_n(\MM{x})}\D\MM{x},\qquad n=0,\ldots,N.
\end{displaymath}
An important insight is that the boundedness (or otherwise) of $\Omega$ plays no role whatsoever in the definition of a spectral method. This makes spectral methods particularly suitable for problems of the type \R{eq:1.1}, in particular once the initial condition $u_0$ is very smooth -- ideally, analytic. It does not eliminate the difficulties entirely --- it merely delegates them to the computation of the associated integrals.

There are five properties on which a time-stepping method for PDEs is typically judged:
\begin{description}
\item[Stability:] The numerical evolution operator must be well posed. This is not an optional extra because, unless a method is stable, its convergence is not assured.
\item[Unitarity:] if the exact solution is unitary, i.e.\ conserves the $\CC{L}_2$ norm, ideally we want our method to inherit this feature. Note that unitarity implies stability.
\item[Conserved quantities:] Many time-dependent PDEs conserve quantities such as a Hamiltonian energy or momentum as $t$ evolves. Ideally, a numerical solution should either respect this feature or at least do so in some approximate sense in long time intervals \cite{hairer06gni}. Unitarity is a special case of this property.
\item[Fast convergence:] The cost of time-stepping  Galerkin equations \R{eq:1.4} typically scales as a power of $N$. Therefore, a critical advantage rests with orthogonal systems yielding approximations that converge rapidly, hence require small $N$. This is the well known trade-off of computational PDE theory: finite differences and finite elements result in very large algebraic (or ODE) systems which are sparse, while spectral methods often yield small, yet dense, systems. There can be exceptions to this (see \cite{olver2013fast} and \cite{olver20fau}).
\item[Rapid time stepping:] Equations \R{eq:1.4} need be solved by a time-stepping ODE solver, perhaps using a splitting method \cite{mclachlan02sm}. While much depends on the nature of the original PDE and on the ODE solver, a critical ingredient is fast computation of an expansion in the basis $\Phi$ (and the inverse operation of evaluating an expansion on a grid of points). This is the secret for the attraction of periodic boundary conditions, that allow for the use of a fast Fourier transform, to determine the first  $N$ coefficients in $\O{N\log_2N}$ operations.
\end{description}

Unless we are fortunate enough to have periodic boundary conditions, stability of a time-stepping spectral method is the first stumbling block. Once applied to boundary-value problems in domains with, say, Dirichlet boundary conditions, a common choice of a basis is orthogonal polynomials: spectral methods using orthogonal polynomials are most familiar on intervals and their tensor products, and recently there have been efforts to extend this to simplexes \cite{dunkl01ops,olver20fau}. However, at least in a na\"ive implementation, orthogonal polynomials do not generally yield stable and well-conditioned spectral methods, hence fail already at the first (and most important) hurdle. 

From now on we assume $d=1$, in other words that there is just one space variable. It is straightforward to generalise the material of this paper by tensorisation to $\BB{R}^d$, $\BB{T}^d$ and parallelepipeds for all $d\in\BB{N}$. It may also be possible to generalise to $d \geq2$ using sparse grids, but this is more technically involved and deferred to future work.

\subsection{Differentiation matrices}

Understanding why orthogonal polynomials can lead to unstable spectral methods for time-dependent PDEs will help us to develop and describe an alternative.

Let $\Phi=\{\varphi_n\}_{n\in\bb{Z}_+}\subset\CC{C}^1(a,b)$ be an orthonormal and complete basis of the Hilbert space $\mathcal{H}$, rich enough so that $\CC{L}_2(a,b)\subseteq\mathcal{H}$. Since $\varphi_m'\in C(a,b) \subset \mathcal{H}$, it follows that
\begin{displaymath}
  \varphi_m'=\sum_{n=0}^\infty \DDD_{n,m}\varphi_n,\qquad m\in\BB{Z}_+,
\end{displaymath}
for some coefficients $\DDD_{n,m} \in \BB{C}$. The infinite-dimensional {\em differentiation matrix\/} $\DDD$ is a fundamental concept in the application of orthonormal bases to spectral methods \cite{hesthaven07smt,trefethen2000sm} and, roughly speaking, it plays the role of the derivative operator in a setting whereby a function $f:\mathcal{H}\rightarrow\BB{C}$ is represented by its expansion coefficients in the basis $\Phi$: if $f=\sum_{n=0}^\infty  \hat{f}_n\varphi_n$ then $f'=\sum_{n=0}^\infty  \hat{f}_n\varphi_n'=\sum_{n=0}^\infty  (\DDD \hat{\MM{f}})_n\varphi_n$.

Subject to the right boundary conditions (see Subsection~2.1), the differential operator is skew-self-adjoint. Given that the differentiation matrix replaces the differential operator in the discretised world, we might also wish it to be skew-self-adjoint, i.e.\ $\DDD^*=-\DDD$. This is an immensely fruitful assumption, which we illustrate by a number of examples.

The simplest possible example is the {\em advection equation\/}
\begin{displaymath}
  \frac{\partial u}{\partial t}=\frac{\partial u}{\partial x},\qquad t\geq0,\quad x\in\BB{R},
\end{displaymath}
whose solution is a unilateral shift, $u(x,t)=u(x+t,0)$. Its spectral approximation is $\hat{\MM{u}}'=\DDD\,\hat{\MM{u}}$ (we might truncate the equations but this makes no difference to our argument), whose solution is $\hat{\MM{u}}(t)=\ee^{t\mbox{\smallcurly D}}\hat{\MM{u}}(0)$. Since $\DDD$ is skew-self-adjoint, $\ee^{t\mbox{\smallcurly D}}$ is unitary and (in the $\ell_2$ norm) $\|\hat{\MM{u}}(t)\| = \|\hat{\MM{u}}_0\|$. Hence we have stability and, even better, unitarity. 

Our next example is the diffusion equation
\begin{displaymath}
  \frac{\partial u}{\partial t}=\frac{\partial }{\partial x} \!\left[a(x) \frac{\partial u}{\partial x} \right]\!,\qquad t\geq0,\quad x\in\BB{R},
\end{displaymath}
where $a(x) > 0$ for all $x$. Our spectral method is $\hat{\MM{u}}'=\DDD \mathcal{A} \DDD\,\hat{\MM{u}}$, where $\mathcal{A}$ is a symmetric positive definite matrix (since $\Phi$ is an orthonormal basis) that depends on $a(x)$ and the choice of $\Phi$. $\DDD$ being skew-self-adjoint implies that $\DDD\mathcal{A}\DDD$ is symmetric and negative semi-definite, so that $\D\|\hat{\MM{u}}(t)\|/\D t\leq0$ and the $\ell_2$ norm of the solution is a decreasing function, just as the $\mathcal{L}_2$ norm of the exact solution. 

Our final example is the dispersive equation \R{eq:1.1}. After discretisation,
\begin{displaymath}
  \ii \hat{\MM{u}}'=-\DDD^2\, \hat{\MM{u}}+\mathcal{G}(t,\hat{\MM{u}})\hat{\MM{u}},
\end{displaymath}
where $\mathcal{G}$ is a self-adjoint matrix whose exact form depends on $G$ and $\Phi$. Therefore
\begin{equation}
  \label{eq:1.5}
  \frac{\D\|\hat{\MM{u}}\|^2}{\D t}={\hat{\MM{u}}}^* \hat{\MM{u}}'+{({\hat{\MM{u}}')}^{*}}\hat{\MM{u}}={\hat{\MM{u}}}^*(\ii\DDD^2-\ii{\DDD^{*}}^2)\hat{\MM{u}}- {\hat{\MM{u}}}^*[\ii\mathcal{G}(t,\hat{\MM{u}}) - \ii\mathcal{G}(t,\hat{\MM{u}})^*]\hat{\MM{u}}=0.
\end{equation}
We deduce that, like \R{eq:1.1}, the solution of \R{eq:1.5} is unitary. What is perhaps even more remarkable is that the proof of unitarity (and stability) is {\em exactly\/} the same as the proof of unitarity (and well posedness) of the original PDE, just replacing the differential operator by a differentiation matrix and  $\CC{L}_2$ by $\ell_2$ norm. 

Once our differentiation matrix is skew-symmetric, we automatically gain both stability and unitarity for a wide range of initial-value problems. Of course, the other three desiderata need not be automatically satisfied.  We address Hamiltonian invariants in Section~4. Insofar as the speed of convergence is concerned, relatively little is known: in general, the speed of convergence in $\CC{L}_2(\BB{R})$ by orthogonal sequences (as opposed to a compact interval) is a largely open issue. Partial results, of relevance to dispersive equations, are reported in \cite{iserles22awp} but in general there is no comprehensive theory. Rapid time stepping has a number of aspects. Firstly, it is convenient to work with tridiagonal differentiation matrices: in Sections~2--3 we show that orthogonal bases with such matrices can be characterised completely in three situations: Cauchy, periodic and zero Dirichlet boundary conditions, and this material forms the bulk of this paper. Secondly, it is possible to characterise {\em all\/} orthogonal bases so that $\DDD$ is skew symmetric, tridiagonal and the expansion can be computed by either fast Fourier or fast cosine or fast sine transform. There are exactly five such bases, which have been described in \cite{iserles21fco}.  Other strategies for fast time stepping include splitting methods \cite{bader14eas,mclachlan02sm,blanes2024splitting} and special methods tailored for high-dimensional Schr\"odinger-type equations like Hagedorn functions, Gaussian beams and the Gaussian wave-packet transform \cite{lasser20cqd}.

In this paper we describe a toolbox for stable spectral methods in a number of settings. Perhaps the most important is the real line, but our theory has been extended also to bounded intervals with periodic boundary conditions. Although in principle we can also use it in a compact interval or in $[a,\infty)$ for some $a\in\BB{R}$, our construction appears to deliver orthonormal bases with poor approximation properties. In that case one may consider an alternative approach, described in \cite{iserles23ost}, whereby the differentiation matrix $\DDD$ is no longer tridiagonal (or banded) but it has a different helpful structure, allowing to form the first $N$ terms of  $\DDD\,\hat{\MM{u}}$ in $\O{N}$ operations. 

In Section 2 we sketch the theory of orthonormal systems with tridiagonal, skew-Hermitian differentiation matrices using the Fourier transform. This theory has been already described in \cite{iserles19oss} for $\CC{L}_2(\BB{R})$ and in \cite{iserles23sos} for Sobolev spaces on the real line with Cauchy boundary conditions, here we extend the setting to other types of boundary conditions. In Section~3 we introduce a new, alternative approach towards the construction of such systems, the {\em differential Lanczos algorithm.\/} We accompany our construction with a number of detailed examples. Finally, in Section~4 we replace inner products by sesquilinear forms. This is motivated by our quest to explore preservation of Hamiltonian energy by spectral methods for the linear Schr\"odinger equation. We prove that, in general, it is impossible to conserve both unitarity and Hamiltonian energy in this setting. Moreover, preservation of Hamiltonian energy leads to upper-Hessenberg differentiation matrices which are neither tridiagonal nor skew symmetric. However, at least in the  examples we have explored, such a differentiation matrix is {\em almost\/} tridiagonal and skew symmetric, a phenomenon whose importance and detailed analysis are matter for future research.

\setcounter{equation}{0}
\setcounter{figure}{0}
\section{On differentiation matrices}

\subsection{Skew-Hermitian differentiation matrices}

The connection between skew-self-adjointness of an operator, and skew-Hermitian-ness of its matrix with respect to a basis for the underlying space depends intimately on the inner product (more generally a duality pairing, but we will not discuss such further abstractions). The setting for our theory is therefore chosen to be a Hilbert space $\mathcal{H}$ with inner product $\langle \cdot , \cdot \rangle$, upon which the differential operator can be defined at least densely on $D(\DDD) \subset \mathcal{H}$. The following property is crucial.
\begin{definition}
  We say that the Hilbert space $\cal{H}$ satisfies the {\em Integration-by-Parts (IbP) property\/} if 
  \begin{equation}
    \label{eq:2.1}
    \Re(\langle f',g\rangle+\langle f,g'\rangle)=0,\qquad f,g\in\mathcal{H}.
\end{equation}
\end{definition}

In the special case when $\cal H$ is a subspace of $\CC{L}_2(a,b)$ with the standard inner product (where $a = -\infty$ or $b=\infty$ are possible), integration by parts yields
\begin{Eqnarray*}
  \langle f',g\rangle &=&\int_a^b f'(x)\overline{g(x)}\D x\\
  &=& \left[f \, \overline{g} \right]_a^b -\int_a^b f(x)\overline{g'(x)}\D x \\
  &=& f(b)\overline{g(b)} - f(a)\overline{g(a)}  - \langle f,g'\rangle.
\end{Eqnarray*}
Therefore, with this inner product, the IbP property is equivalent to 
\begin{equation}
  \label{eq:2.2}
  \Re [f(b)\overline{g(b)} - f(a) \overline{g(a)}]=0,
\end{equation}
for all $f,g\in \cal H$.

The condition \R{eq:2.2} and its generalisation to a Sobolev setting are consistent with three  scenarios:
\begin{enumerate}
	\item $a=-\infty$, $b=\infty$ and $\mathcal{H} \subseteq \CC{L}_2(\BB{R})$ implies Cauchy boundary conditions. This is the setting of \cite{iserles19oss,iserles20for,iserles21fco,iserles23sos} which we will revisit briefly in Subsection~2.2;
	\item $(a,b)\subset\BB{R}$ with zero Dirichlet boundary conditions;
	\item Periodic boundary conditions in a bounded interval $(a,b)$. As a matter of fact, we may allow a mildly more general setting of {\em twisted periodicity,\/} whereby there exists $\kappa\in(-\pi,\pi]$ such that $f(a)=\ee^{\ii\kappa} f(b)$ for every $f\in{\cal H}$. This has applications to Bloch waves in solid state physics \cite{kittel2015introduction} and in the theory of sound waves \cite{maierhofer2020wsi,posson10ouu}.
\end{enumerate}

Yet, IbP is true in a considerably more general Hilbert spaces. Thus, let $v_k\geq0$, $k\in\BB{Z}_+$, not all zero. Following \cite{iserles23sos}, we let 
\begin{equation}
  \label{eq:2.3}
  \langle f,g\rangle_v =\sum_{k=0}^\infty v_k  \int_{a}^b f^{(k)}(x) \overline{g^{(k)}(x)}\D x.
\end{equation}
whereby the Hilbert space $\mathcal{H}=\mathcal{H}_v$ comprises  all functions $f$ such that $\langle f,f\rangle_v<\infty$. 

It is helpful to define $v(\xi)=\sum_{k=0}^\infty v_k \xi^{2k}$. The proof of the IbP property is a straightforward generalisation of the special case $v(\xi)\equiv1$, which corresponds to the $\CC{L}_2(a,b)$ inner product. In particular,  $v(\xi)=\sum_{k=0}^p \xi^{2k}$ results in orthogonality in the {\em Sobolev space\/} $\CC{H}^p(a,b)$. 

\subsection{T-systems}\label{sec:Tsystems}

Let $\tilde{\Phi} = \{\tilde{\varphi}_n\}_{n = 0}^\infty$ be an orthonormal basis of $\CC{L}_2(a,b)$ (where $a = -\infty$ or $b=\infty$ are possible) and assume that $\tilde{\varphi}_n\in\CC{C}^\infty(a,b)$. We say that it is a {\em T-system\/} if the differentiation matrix $\tilde{\DDD}$ is both skew-Hermitian and tridiagonal\footnote{T stands for tridiagonal.}. In other words,
\begin{equation}
  \label{eq:2.4}
  \tilde{\varphi}_n'=-\overline{b}_{n-1}\tilde{\varphi}_{n-1}+\ii c_n \tilde{\varphi}_n+b_n \tilde{\varphi}_{n+1},\qquad n\in\BB{Z}_+,
\end{equation}
where $c_n\in\BB{R}$ and $b_{-1}=0$. We assume that $b_n\neq0$, $n\in\BB{Z}_+$, so that $\widetilde{\DDD}$ is irreducible. 

The main reason why tridiagonality is important is that it leads to affordable linear algebra. Note that the matrix $\widetilde{\DDD}$, acting on expansion coefficients of $f\in\mathcal{H}$, plays the same role as the differential operator applied to $f$, hence we `replace' differentiation by a multiplication with (in practice, truncated) tridiagonal matrix.

Suppose that $b_n=|b_n|\ee^{\ii\theta_n}$, $n\in\BB{Z}_+$, and set
\begin{displaymath}
  \varphi_n(x)=\exp\!\left(\ii\sum_{\ell=0}^{n-1}\theta_l\right)\!\tilde{\varphi}_n(x),\qquad n\in\BB{Z}_+.
\end{displaymath}
Substituting into \R{eq:2.4}, simple algebra confirms that 
\begin{equation}
  \label{eq:2.5}
  \varphi_n'=-|b_{n-1}|\varphi_{n-1}+\ii c_n \varphi_n+|b_n|\varphi_{n+1},\qquad n\in\BB{Z}_+.
\end{equation}
Therefore, we can assume without loss of generality that $b_n>0$, $n\in\BB{Z}_+$ (hence replacing $|b_m|$ with $b_m$ in the above expression, as we do in the sequel). In other words, the main diagonal of $\DDD$ is pure imaginary, while its principal sub- and super-diagonals are strictly negative and strictly positive, respectively. 

We completely characterise in the sequel all the T-systems corresponding to Cauchy and periodic boundary conditions, while arguing that in the presence of zero Dirichlet boundary conditions (whether in a finite or semi-infinite interval) the approach of T-systems is unlikely to work. In the latter case we may adopt an alternative approach, introduced in \cite{iserles23ost}.

Commencing from \R{eq:2.5}, it is possible to represent each $\varphi_n$ in terms of $\varphi_0$ and its derivatives. This has been already pointed out (in a somewhat simplified setting) in \cite{iserles19oss} but without any tangible consequences: in this paper we demonstrate the deep importance of this mechanism. Thus, let $\{b_n\}_{n\in\bb{Z}_+}\in\BB{R}_+^\infty$ and $\{c_n\}_{n\in\bb{Z}_+}\in\BB{R}^\infty$ be given. Letting $n=0$ in \R{eq:2.5}, $\varphi_0'=\ii c_0 \varphi_0+b_0\varphi_1$, we solve for $\varphi_1$ and obtain $\varphi_1=b_0^{-1}(-\ii c_0\varphi_0+\varphi_0')$.  Next we set $n=1$, solve for $\varphi_2$, $\varphi_2=(b_0b_1)^{-1}[(b_0^2-c_0c_1)\varphi_0-\ii(c_0+c_1)\varphi_0+\varphi_0'']$ and continue in this vain for increasing $n$. It follows by trivial induction that
\begin{equation}
  \label{eq:2.6}
  \varphi_n(x)=\frac{1}{b_0b_1\cdots b_{n-1}}\sum_{\ell=0}^n \alpha_{n,\ell} \varphi_0^{(\ell)}(x),\qquad n\in\BB{Z}_+,
\end{equation}
where
\begin{Eqnarray*}
  \nonumber
  \alpha_{n+1,\ell}&=&b_{n-1}^2 \alpha_{n-1,0}-\ii c_n \alpha_{n,0} ,\\
  \alpha_{n+1,\ell}&=&\alpha_{n,\ell-1}+b_{n-1}^2\alpha_{n-1,\ell}-\ii c_n\alpha_{n,\ell},\qquad \ell=1,\ldots,n-1,\\
  \alpha_{n+1,n}&=&\alpha_{n,n-1}-\ii c_n\alpha_{n,n},\\
  \alpha_{n+1,n+1}&=&\alpha_{n,n}=1.
\end{Eqnarray*}
This approach has not been pursued further in \cite{iserles19oss} because, while it generates a set $\Phi$, in general it is not orthonormal. Moreover, in the case of Cauchy boundary condition (and, later in this paper, also in the case of periodic boundary conditions) there exists an alternative approach, which forms the theme of Subsections 2.3--4.  However, it turns out that \R{eq:2.6}, with proper setting and interpretation, the representation \R{eq:2.6} of fundamental importance and forms the spine of Section~3.

\begin{definition}
  Let $g\in\CC{C}^\infty(a,b)$. The linear space
  \begin{displaymath}
    {\DD K}_n(g)=\CC{Span}\,\{g,g',\ldots,g^{(n-1)}\}
  \end{displaymath}
  is called the $n$th {\em differential Krylov space.\/}
\end{definition}

The name is modelled upon the familiar Krylov subspaces  $\CC{Span}\,\{\MM{v},A\MM{v},\ldots,A^{n-1}\MM{v}\}$, where $A$ and $\MM{v}$ are a matrix and a vector, respectively, and which form the mainstay of modern numerical algebra \cite{golub96mc}. It is easy to verify that, unless $g(x)$ is a linear combination of functions of the form $p(x)\ee^{\alpha x}$ for polynomials $p$ and $\alpha\in\BB{C}$ (in other words, $g$ is a solution of an homogeneous linear ordinary differential equation with constant coefficients), it is always true that $\dim\DD{K}_n(g)=n$ \cite[Thm 2.4]{olver2009gmres}. In the context of this paper we deduce from \R{eq:2.6} that a T-function $\varphi_n$ lives in $\DD{K}_{n+1}(\varphi_0)$ for all $n\in\BB{Z}_+$.

\begin{definition}\label{def:seed}
	A smooth function $\varphi_0$ is a seed function if it is not the solution of an homogeneous linear ordinary differential equation with constant coefficients.
\end{definition}

\subsection{The Cauchy problem {\em via\/} the Fourier transform}

The standard approach to T-systems on the real line, pioneered in \cite{iserles19oss} for $\mathcal{H}=\CC{L}_2(\BB{R})$ and extended to Sobolev-like spaces ${\mathcal H}_v$ in \cite{iserles23sos}, uses the Fourier transform to turn \R{eq:2.5} into
\begin{equation}
  \label{eq:2.7}
  b_n \vartheta_{n+1}(\xi)=(\xi-c_n)\vartheta_n(\xi)-b_{n-1}\vartheta_{n-1}(\xi),\qquad n\in\BB{Z}_+,\quad \xi\in\BB{R},
\end{equation}
where $\vartheta_n(\xi)=(-\ii)^n\hat{\varphi}_n(\xi)$.

Experts in orthogonal polynomials would immediately identify \R{eq:2.7} with the three-term recurrence relation for orthonormal polynomials and recall the Favard theorem \cite[p.~21]{chihara78iop}, also known as the spectral theorem for orthogonal polynomials \cite[p.~30]{ismail05cqo}: Any system of polynomials that obeys \R{eq:2.7} and is such that $\vartheta_0$ is a constant and $\vartheta_{-1}$ is taken as zero, is an orthonormal polynomial sequence with respect to some Borel measure $\D\mu$. 

This process can be reversed: Let $\{p_n\}_{n\in\bb{Z}_+}$ be the set of orthonormal polynomials with respect to $\D\mu$ and such that the coefficient of $\xi^n$ in $p_n(\xi)$ is positive. We let
\begin{displaymath}
   \vartheta_n(\xi)=\sqrt{\frac{w(\xi)}{v(\xi)}} p_n(\xi),\qquad \xi\in\BB{R},
\end{displaymath}
where $w$ is the Radon--Nikodym derivative of $\mu$ (thus, $w(\xi)\D\xi=\D\mu(\xi)$ and map $\ii^n\vartheta_n$ to $\varphi_n$ with the inverse Fourier transform. The functions $\varphi_n$ obey \R{eq:2.5} (because of \R{eq:2.7}) and are orthonormal (as can be easily proved using the Plencharel theorem). However, the set $\Phi$ need not be a complete basis for $\mathcal{H}_v$: for example, if $v\equiv1$ and $\mathcal{H}_v=\CC{L}_2(\BB{R})$, the closure of $\Phi$ is a Payley--Wiener space of $\CC{L}_2(\BB{R})$ functions which are band-limited to the support of $\D\mu$. In other words, $\Phi$ is a basis of $\CC{L}_2(\BB{R})$ if and only if the support of $\D\mu$ is the entire real line. This generalises to Sobolev-like Payley--Wiener spaces \cite{iserles23sos}.  

\subsection{Periodic boundary conditions {\em via\/} the Fourier transform}

\subsubsection{The theory}

The obvious orthonormal system in the presence of periodic boundary conditions is the standard scaled Fourier system. The purpose of this subsection is not to offer an alternative to Fourier expansions, with their well-known advantages in computing fast expansion coefficients with FFT, just to add to the theory of T-systems and make it more comprehensive.

Without loss of generality we assume that the interval of periodicity is $[-\pi,\pi]$ and seek an orthonormal system $\Phi$, dense in $\mathcal{H}=\CC{L}_2[-\pi,\pi]\cap \CC{C}^{\CC{per}}[-\pi,\pi]$\footnote{Unless a function is continuous at the endpoints, it is impossible to talk of periodicity.} (for brevity we restrict the discussion to $\CC{L}_2$ setting, while mentioning that a generalisation to $\mathcal{H}_v$ is simple) and consistent with the differential recurrence \R{eq:2.5}. Each $\varphi_n$ can be expanded into Fourier series,
\begin{equation}
  \label{eq:2.8}
  \varphi_n(x)=\sum_{m=-\infty}^\infty r_{n,m}\ee^{\ii mx},\qquad n\in\BB{Z}_+,
\end{equation}
where
\begin{displaymath}
  r_{n,m}=\frac{1}{2\pi}\int_{-\pi}^\pi \varphi_n(x)\ee^{-\ii mx}\D x,\qquad m\in\BB{Z},\;\;n\in\BB{Z}_+.
\end{displaymath}
Substituting this in \R{eq:2.5} (so as to have a T-system), we easily deduce that
\begin{equation}
  \label{eq:2.9}
  \ii mr_{n,m}=-b_{n-1}r_{n-1,m}+\ii c_n r_{n,m}+b_n r_{n+1,m},\qquad m\in\BB{Z},\;\;n\in\BB{Z}_+.
\end{equation}

\begin{proposition}
  It is true that
  \begin{equation}
    \label{eq:2.10}
    r_{n,m}=\frac{\ii^n}{b_0b_1\cdots b_{n-1}} q_n(m) r_{0,m},\qquad m\in\BB{Z},\;\;n\in\BB{Z}_+,
  \end{equation}
  where each $q_n$ is an $n$th-degree monic polynomial that obeys the three-term recurrence relation
  \begin{equation}
    \label{eq:2.11}
    q_{n+1}(m)=(m-a_n)q_n(m)-b_{n-1}^2 q_{n-1}(m).
  \end{equation}
\end{proposition}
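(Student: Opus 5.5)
The plan is to argue by (strong) induction on $n$, with the recurrence \R{eq:2.9} as the only input. Throughout I read the coefficient $a_n$ in \R{eq:2.11} as the diagonal coefficient $c_n$ of \R{eq:2.5}. This is the only reading consistent with \R{eq:2.9}, and the computation below shows it is forced. I will first \emph{define} the polynomials $q_n$ by \R{eq:2.11} with $q_{-1}\equiv0$ and $q_0\equiv1$. Since $a_n=c_n$ and the $b_n$ are fixed reals, a trivial induction shows that each $q_n$ is monic of degree $n$. It then remains to show that the sequence $r_{n,m}$ determined by \R{eq:2.9} coincides with the right-hand side of \R{eq:2.10}.

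First I rewrite \R{eq:2.9} as a forward recurrence in $n$. Because $b_n>0$, it can be solved for $r_{n+1,m}$:
\begin{displaymath}
  b_n r_{n+1,m}=\ii(m-c_n)r_{n,m}+b_{n-1}r_{n-1,m},\qquad m\in\BB{Z},\;\;n\in\BB{Z}_+ .
\end{displaymath}
This shows that all the $r_{n,m}$ are uniquely determined by $r_{0,m}$, since $r_{-1,m}=0$ because $b_{-1}=0$. It also shows that the dependence on $r_{0,m}$ is linear, so the case $r_{0,m}=0$ needs no separate treatment. The base cases are immediate. For $n=0$, \R{eq:2.10} is the identity $r_{0,m}=r_{0,m}$ with $q_0=1$. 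For $n=1$, the displayed recurrence gives $r_{1,m}=\ii b_0^{-1}(m-c_0)r_{0,m}$, which is \R{eq:2.10} with $q_1(m)=m-c_0$.

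For the inductive step, write $B_n=b_0b_1\cdots b_{n-1}$ and assume \R{eq:2.10} holds for $n-1$ and for $n$. Substituting into the right-hand side of the forward recurrence gives
\begin{displaymath}
  b_n r_{n+1,m}=\left[\frac{\ii^{n+1}}{B_n}(m-c_n)q_n(m)+\frac{b_{n-1}\ii^{n-1}}{B_{n-1}}q_{n-1}(m)\right]\! r_{0,m}.
\end{displaymath}
Next I use $B_n=B_{n-1}b_{n-1}$, $B_{n+1}=B_nb_n$ and $\ii^{n-1}=-\ii^{n+1}$, and divide by $b_n$. The bracket then becomes $\ii^{n+1}B_{n+1}^{-1}\bigl[(m-c_n)q_n(m)-b_{n-1}^2q_{n-1}(m)\bigr]$. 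By the definition \R{eq:2.11} this equals $\ii^{n+1}B_{n+1}^{-1}q_{n+1}(m)$, which is exactly \R{eq:2.10} for $n+1$.

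There is no genuine obstacle here; the argument is bookkeeping. The one point requiring care is the powers of $\ii$ together with the products of the $b_k$. Specifically, the factor $\ii^{-2}=-1$ is what turns the $+b_{n-1}$ in \R{eq:2.9} into the $-b_{n-1}^2$ in \R{eq:2.11}, and the ratio $B_n/B_{n-1}=b_{n-1}$ supplies the square.
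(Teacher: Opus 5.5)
Your proof is correct and takes the same approach as the paper, whose entire proof is ``direct substitution in \R{eq:2.9}''; your induction, which solves \R{eq:2.9} forward for $r_{n+1,m}$ and tracks $\ii^{n-1}=-\ii^{n+1}$ together with $B_n=B_{n-1}b_{n-1}$, is that substitution written out in full. You are also right to read the $a_n$ in \R{eq:2.11} as the diagonal coefficient $c_n$, since this is the only reading consistent with \R{eq:2.9}.
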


\begin{proof}
  Direct substitution in \R{eq:2.9}.
\end{proof}

It is helpful to convert a monic orthogonal system into an orthonormal one: according to \cite[p.~215--6]{chihara78iop}, the orthonormal polynomials consistent with the three-term recurrence relation \R{eq:2.9} are
\begin{displaymath}
  p_n(x)=\frac{q_n(x)}{b_0b_1\cdots b_{n-1}},\qquad n\in\BB{Z}_+.
\end{displaymath}
Therefore, we can rewrite \R{eq:2.10} in the form
\begin{equation}
  \label{eq:2.12}
  r_{n,m}=\ii^n p_n(m)r_{0,m},\qquad n\in\BB{Z},\;\; n\in\BB{Z}_+.
\end{equation}

Letting \R{eq:2.10} in \R{eq:2.8} does not guarantee orthogonality. However, by the Plencharel theorem (again!) it is true for every $f,g\in \CC{L}_2[-\pi,\pi]$ that
\begin{displaymath}
  \langle f,g\rangle =\frac{1}{2\pi} \int_{-\pi}^\pi \left(\sum_{k=-\infty}^\infty \hat{f}_k\ee^{\ii kx}\right)\!\overline{\left(\sum_{\ell=-\infty}^\infty \hat{g}_\ell \ee^{\ii\ell x}\right)}\D x=\sum_{k=-\infty}^\infty \hat{f}_k \overline{\hat{g}}_k.
\end{displaymath}
Therefore
\begin{displaymath}
  \langle\varphi_m,\varphi_n\rangle=\sum_{k=-\infty}^\infty r_{m,k}\overline{r}_{n,k}
\end{displaymath}
and, by virtue of \R{eq:2.10}, orthonormality of $\Phi$ is equivalent to 
\begin{equation}
  \label{eq:2.13}
  (-1)^n \ii^{m+n}\sum_{k=-\infty}^\infty |r_{0,k}|^2 p_m(k) p_n(k)=\delta_{m,n},\qquad m\in\BB{Z},\; n\in\BB{Z}_+.
\end{equation}

\begin{theorem}
  Let $\D\mu$ be an atomic measure on $\BB{R}$ with jumps of $w_n\geq0$ at $n\in\BB{Z}$, where $0<\sum_{n=0}^\infty w_n<\infty$, and set
  \begin{displaymath}
    w(x)=\sum_{n=0}^\infty w_n \delta(x-n),
  \end{displaymath}
  where $\delta$ is the delta function. Then
  \begin{displaymath}
      r_{0,m}=\sqrt{w_m},\qquad m\in\BB{Z}
  \end{displaymath}
  and the sequence
  \begin{equation}
    \label{eq:2.14}
    \varphi_n(x)= \ii^n \sum_{k=-\infty}^\infty \sqrt{w_k} p_n(k) \ee^{\ii kx},\qquad n\in\BB{Z}_+,
  \end{equation}
  is orthonormal.
\end{theorem}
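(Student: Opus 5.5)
The plan is to reduce orthonormality of \R{eq:2.14} to orthonormality of the polynomials $p_n$ with respect to the atomic measure $\D\mu$, via the Plancherel identity already recorded above. First fix the interpretation: $\{p_n\}$ is the orthonormal polynomial sequence of $\D\mu$, which Favard's theorem produces from the recurrence \R{eq:2.11} once the recurrence coefficients are those of $\D\mu$. Since $\D\mu$ is atomic with jumps $w_k$ at the integers, this orthonormality reads
\begin{displaymath}
  \int_{\bb{R}} p_m(\xi)p_n(\xi)\D\mu(\xi)=\sum_{k} w_k\,p_m(k)p_n(k)=\delta_{m,n}.
\end{displaymath}
The normalisation $\sum w_k<\infty$, $\sum w_k>0$ ensures that $p_0$ is a well-defined positive constant; scaling $\D\mu$ we may take $\sum w_k=1$, so $p_0\equiv1$.

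Next I check that the $\varphi_n$ in \R{eq:2.14} are consistent with \R{eq:2.12}. Set $r_{0,k}=\sqrt{w_k}$. Since $p_0\equiv1$, the $n=0$ case of \R{eq:2.14} has Fourier coefficients $\sqrt{w_k}$, and the general coefficient is $r_{n,k}=\ii^n p_n(k)\sqrt{w_k}$. This is exactly \R{eq:2.12}, so by the preceding Proposition (read backwards) \R{eq:2.9} holds, and hence so does the T-system relation \R{eq:2.5}. This consistency check is not strictly needed for orthonormality, but it justifies the identification $r_{0,m}=\sqrt{w_m}$ asserted in the statement.

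The main computation is then direct. Using the Plancherel formula $\langle\varphi_m,\varphi_n\rangle=\sum_k r_{m,k}\overline{r_{n,k}}$, I write
\begin{displaymath}
  \langle\varphi_m,\varphi_n\rangle=\ii^m\overline{\ii^n}\sum_k w_k\,p_m(k)p_n(k)=\ii^{m}(-\ii)^{n}\delta_{m,n}=\delta_{m,n}.
\end{displaymath}
Here the factor $\ii^m(-\ii)^n$ equals $1$ whenever $m=n$ and is irrelevant otherwise. This matches \R{eq:2.13}, since $(-1)^n\ii^{m+n}=\ii^m(-\ii)^n$.

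The main obstacle is convergence and legitimacy. I must ensure each $\varphi_n\in\CC{L}_2[-\pi,\pi]$, i.e.\ $\sum_k w_k p_n(k)^2<\infty$, which is precisely finiteness of the $\D\mu$-norm of $p_n$. This requires $\D\mu$ to have finite moments of all orders: $\sum_k w_k k^{2n}<\infty$ for every $n$. I would make this an explicit hypothesis, implicit in the existence of the $p_n$. I also need infinitely many positive $w_k$, so that all $p_n$ exist and are nondegenerate; with finitely many atoms only finitely many $p_n$ exist.

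A further point is that the statement's $\sum_{n=0}^\infty$ should be read as a sum over $n\in\BB{Z}$, to match $r_{0,m}$ for $m\in\BB{Z}$. With these points settled, Cauchy--Schwarz gives absolute convergence of $\sum_k w_k p_m(k)p_n(k)$, which justifies the termwise Plancherel computation and completes the argument.
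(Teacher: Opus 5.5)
Your proposal is correct and follows the paper's own route. Parseval reduces $\langle\varphi_m,\varphi_n\rangle$ to $\ii^{m-n}\sum_k w_k\,p_m(k)p_n(k)=\ii^{m-n}\int p_m p_n\,\mathrm{d}\mu=\delta_{m,n}$, which is exactly the paper's one-line proof; your phase $\ii^{m-n}$ is the correct one, and the paper's $\ii^{n-m}$ is a harmless slip because it multiplies $\delta_{m,n}$. The hypotheses you make explicit are genuinely needed and left implicit in the paper: finite moments of all orders and infinitely many positive $w_k$ (so every $p_n$ exists and $\varphi_n\in\mathrm{L}_2$), the normalisation $\sum_k w_k=1$ (so $p_0\equiv1$ and $r_{0,m}=\sqrt{w_m}$ is consistent with $r_{n,m}=\ii^n p_n(m)r_{0,m}$), and summation over all integers rather than $k\geq0$.
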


\begin{proof}
  Using \R{eq:2.10}, we have
  \begin{displaymath}
    \langle \varphi_m,\varphi_n\rangle=\ii^{n-m} \sum_{m=-\infty}^\infty w_k p_m(k) p_n(k)=\ii^{n-m} \int_{-\infty}^\infty p_m(x)p_n(x)\D\mu(x)
  \end{displaymath}
   and the theorem follows. 
\end{proof}

The factor $\ii^n$ in front of $\varphi_n$ might appear somewhat worrisome, but this is illusory. Unless $c_n\equiv0$, the $\varphi_n$ {\em should\/} be complex-valued. On the other hand, once $c_n\equiv0$,  each $p_n$ retains the parity of $n$ and $w_{-k}=w_k$, $k\in\BB{Z}_+$. Therefore
\begin{Eqnarray}
  \label{eq:2.15}
  \varphi_{2n}(x)&=&(-1)^n \left[\sqrt{w_0}p_{2n}(0)+2\sum_{k=1}^\infty \sqrt{w_k} p_{2n}(k) \cos k x\right]\!,\\
  \nonumber
  \varphi_{2n+1}(x)&=&2(-1)^{n+1}\sum_{k=1}^\infty \sqrt{w_k} p_{2n+1}(k)\sin kx
\end{Eqnarray}
and $\Phi$ is real.

\subsubsection{Example: Simple periodic setting}

We did not provide any examples of T-systems for the Cauchy problem in Subsection~2.3 because they have been covered in depth in \cite{iserles19oss,iserles20for,iserles21fco,Iserles21daf}. The material of the current subsection is new and a couple of examples might be instructive.

Let $a\in(0,1)$. Our first example the atomic measure with jumps of $a^{|k|}$ at $k\in\BB{Z}$. We have evaluated the first few $p_n$s and $\varphi_n$s by brute force:
\begin{Eqnarray*}
  p_0(x)&\equiv&\sqrt{\frac{1-a}{1+a}},\\
  p_1(x)&=&\sqrt{\frac{(1-a)^3}{2a(1+a)}} x,\\
  p_2(x)&=&\sqrt{\frac{1-a}{2a(1+a)(1+8a+a^2)}} [-2a+(1-a)^2 x^2],\\
  p_3(x)&=&\sqrt{\frac{1-a}{12a}} \sqrt{\frac{1}{(1+a)(1+4a+a^2)}} [-(1+10a+a^2)x+(1-a)^2x^3].
\end{Eqnarray*}
For simplicity, we have computed the first few T-functions for $a=\frac12$:
\begin{Eqnarray*}
  \varphi_0(x)&=&\frac{\sqrt{6}}{3} \frac{1}{3\sqrt{2}-4\cos x},\\
  \varphi_1(x)&=&\frac{4\sqrt{3}}{3} \frac{\sin x}{(3\sqrt{2}-4\cos x)^2},\\
  \varphi_2(x)&=&\frac{2\sqrt{14}}{21} \frac{-26+27\sqrt{2}\cos x-12\cos^2x}{(3\sqrt{2}-4\cos x)^3},\\
  \varphi_3(x)&=&-\frac{8\sqrt{39}}{39} \frac{(11\sqrt{2}-16\cos x)(\sqrt{2}-\cos x)\sin x}{(3\sqrt{2}-4\cos x)^4}
\end{Eqnarray*}
-- a pattern of sorts emerges but we do not pursue it further in this paper.

\begin{figure}[htb]
  \begin{center}
    \includegraphics[width=160pt]{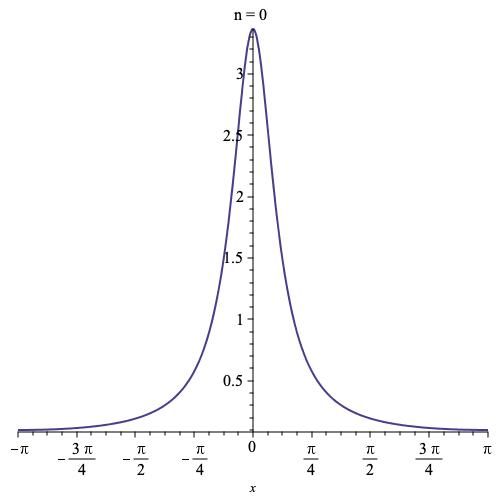}\hspace*{15pt}\includegraphics[width=160pt]{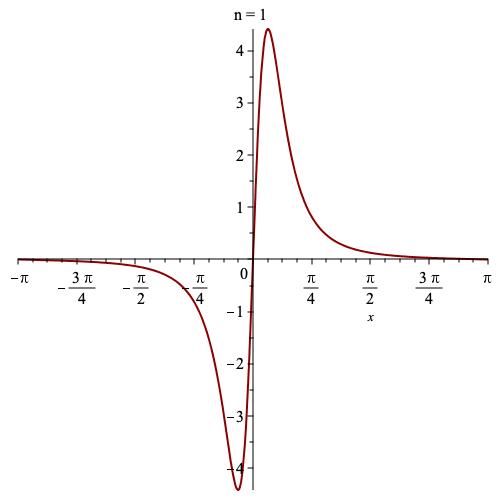}
    \includegraphics[width=160pt]{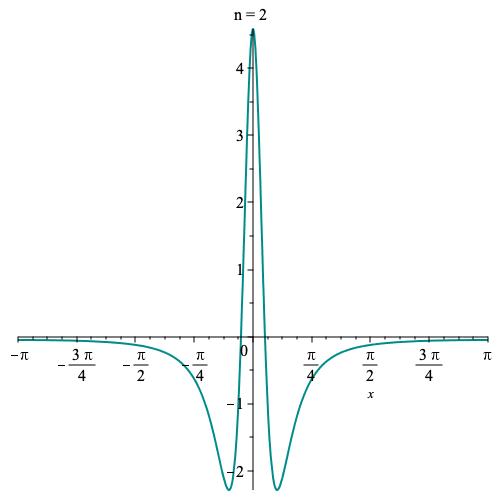}\hspace*{15pt}\includegraphics[width=160pt]{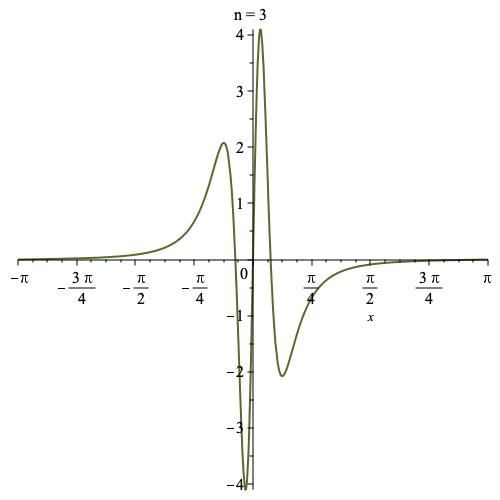}
    \caption{The functions $\varphi_n$, $n=0,1,2,3$, for the first periodic example and $a=\frac12$.}
    \label{fig:2.1}
  \end{center}
\end{figure}

The first few functions $\varphi_n$ are displayed in Fig.~\ref{fig:2.1}.

\subsubsection{Example: The Charlier measure}

Let $a>0$. The {\em Charlier measure\/} has jumps $w_k=\ee^{-a}a^k/k!$ at $k\in\BB{Z}_+$ \cite[p.~170--172]{chihara78iop}. The underlying monic {\em Charlier polynomials\/} have the form
\begin{equation}
  \label{eq:2.16}
  \CC{C}_n^a(x)=\sum_{k=0}^n {n\choose k}{x\choose k} k! (-a)^{n-k},
\end{equation}
exhibit the symmetry $\CC{C}_n^a(k)=\CC{C}_k^a(n)$, $k,n\in\BB{Z}_+$ \cite[18.21.2]{dlmf} and some important applications to Poisson point processes in probability theory.

We have emphasised in Subsection~2.2 that in the case of Cauchy  boundary conditions the underlying measure should be supported on the whole real line. In the current setting of periodic T-functions, the atomic measure $\D\mu$ must have jumps at all integer points. This is not the case for the Charlier measure but the situation is recoverable, along similar lines to the observation in \cite{iserles20for} that has led there from the Laguerre weight (supported in $[0,\infty)$) to the Malmquist--Takenaka T-system (dense in $\CC{L}_2(\BB{R})$). Thus, we append to $\{\varphi_n\}_{n\in\bb{Z}_+}$ its `mirror system', generated by the weight with jumps $w_k=\ee^{-a}a^{-k}/(-k)!$ at $k=0,1,\ldots$. The union of both systems, which we index in $\BB{Z}$, is complete in $\CC{L}_2(\BB{R})$. Its differentiation matrix $\DDD$ is tridiagonal but no longer irreducible, since $\DDD_{-1,0}=\DDD_{0,-1}=0$.

Since in the underlying measure $\|\CC{C}_n^a\|=\sqrt{\ee^{-a}a^nn!}$, we convert \R{eq:2.16} to an orthonormal system,
\begin{displaymath}
  p_n(x)=\sqrt{\frac{\ee^a}{n!a^n}}\CC{C}_n^a(x),\qquad n\in\BB{Z}_+.
\end{displaymath}
The three-term recurrence relation is
\begin{displaymath}
  \sqrt{a(n+1)} p_{n+1}(x)=(x-n-a) p_n(x)-\sqrt{an}p_{n-1}(x),\qquad n\in\BB{Z}_+.
\end{displaymath}

Using \R{eq:2.15}, we have
\begin{Eqnarray*}
  \varphi_{2n}(x)&=&\frac{\ee^{-a}}{a^n (2n)!}(-1)^n \left[\CC{C}_{2n}^a(0)+2\sum_{k=1}^\infty \frac{a^{k/2}}{\sqrt{k!}} \CC{C}_{2n}^a(k) \cos kx\right]\!,\\
  \varphi_{2n+1}(x)&=&\frac{2\ee^{-a}(-1)^{n+1}}{a^{n+\frac12}\sqrt{(2n+1)!}} \sum_{k=1}^\infty \frac{a^{k/2}}{\sqrt{k!}} \CC{C}_{2n+1}^a(k)\sin kx,\qquad n\in\BB{Z}_+,
\end{Eqnarray*}
with an obvious extension to $n\leq-1$. The above sums are not available in an explicit form.

\begin{figure}[htb]
  \begin{center}
    \includegraphics[width=160pt]{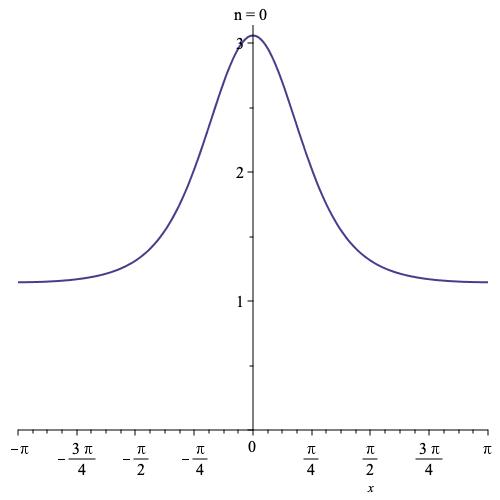}\hspace*{15pt}\includegraphics[width=160pt]{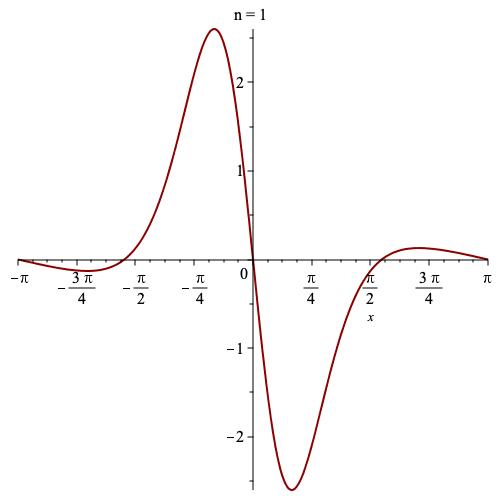}
    \includegraphics[width=160pt]{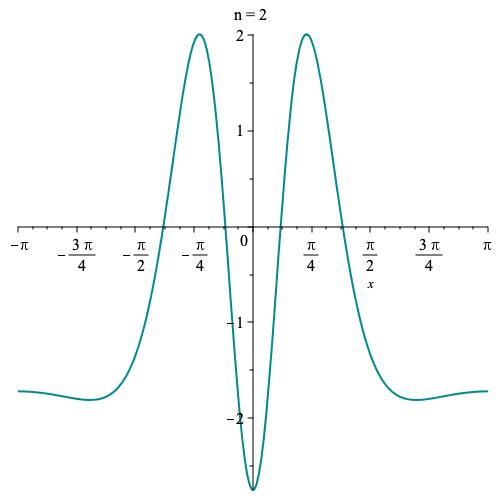}\hspace*{15pt}\includegraphics[width=160pt]{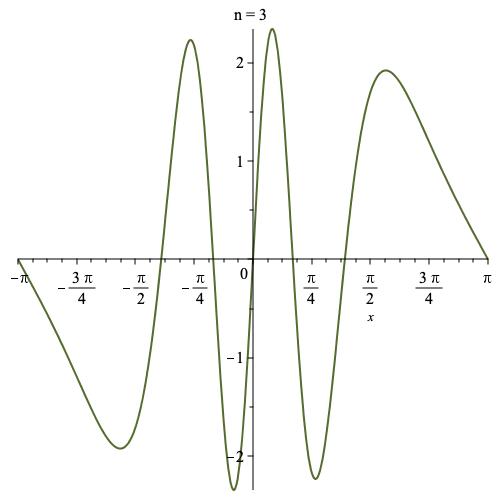}
    \caption{The functions $\varphi_n$, $n=0,1,2,3$, for the Charlier measure and $a=\frac12$.}
    \label{fig:2.2}
  \end{center}
\end{figure}

The first four Charlier T-functions are displayed in Fig.~\ref{fig:2.2}. 

There is a number of other measures which might be interesting in the current context. Restricting our attention to the {\em Askey scheme\/} \cite[18.21.1]{dlmf}, we might consider the Meixner measure. Three further measures in Askey's scheme -- Racah, Krawtchouk and Hahn -- have a finite number of points of increase at $n=1,2,\ldots,N$ for some $N$. They (together with their `mirror measures') generate finite sets of $T$-functions which, for sufficiently large $N$, might be suitable for approximation. We do not pursue this further in this paper. 

\subsubsection{Twisted periodic boundary conditions}

A generalisation from periodic to twisted-periodic boundary conditions in $[-\pi,\pi]$ is straightforward, all we need is to multiply the functions $\varphi_n$ in \R{eq:2.15} by $\ee^{\ii\kappa x/(2\pi)}$. 

\subsection{Zero Dirichlet boundary conditions}

Let $\mathcal{H}=\CC{C}^\infty[-1,1]$ and assume that $\varphi_n(\pm1)=0$, $n\in\BB{Z}_+$. In addition, we assume that $\varphi_0$ is analytic and note that, by virtue of \R{eq:2.6}, the same is true for all $\varphi_n$s. Recalling that $\MM{\varphi}'=\DDD\, \MM{\varphi}$, we deduce by induction that $\MM{\varphi}^{(r)}=\DDD^{\,r}\MM{\varphi}$. Moreover, $\DDD$ being tridiagonal, all its powers are bounded and we deduce that $\varphi_n^{(r)}(\pm1)=0$, $n,r\in\BB{Z}_+$. Analyticity implies that $\varphi_n$ is identically zero for all $n\in\BB{Z}_+$, which contradicts orthonormality. 

There are two remedies. One, which we do not pursue in this paper, is to give up on analyticity, at least at the endpoints. Specifically, we may impose an essential singularity at the endpoints, e.g.\ set
\begin{equation}
  \label{eq:2.17}
  \varphi_0(x)=a_0 \exp\!\left(-\frac{1}{1-x^2}\right)\!,\qquad x\in(-1,1),
\end{equation}
where $a_0\approx 2.741155146$ guarantees that $\|\varphi_0\|_{\CC{L}_2(-1,1)}=1$. 

\begin{figure}[htb]
  \begin{center}
    \includegraphics[width=120pt]{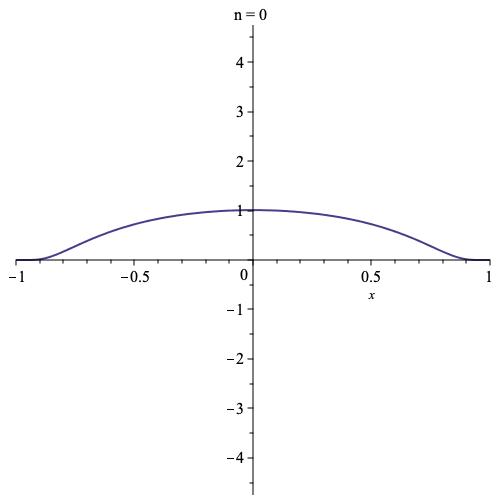}\hspace*{10pt}\includegraphics[width=120pt]{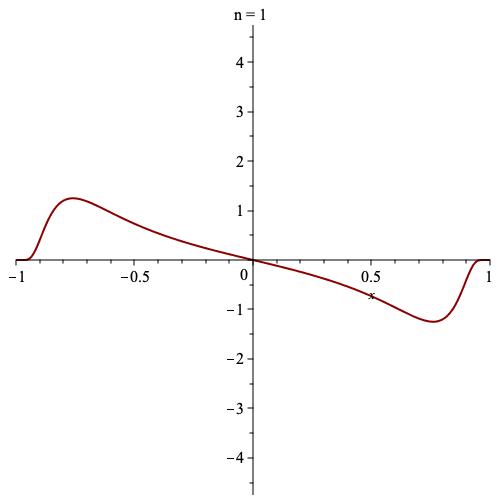}\hspace*{10pt}\includegraphics[width=120pt]{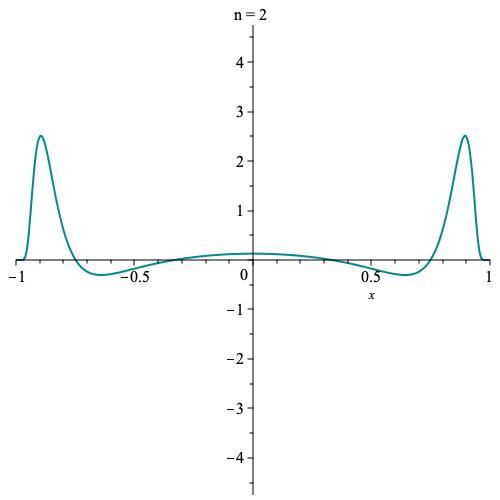}
    \includegraphics[width=120pt]{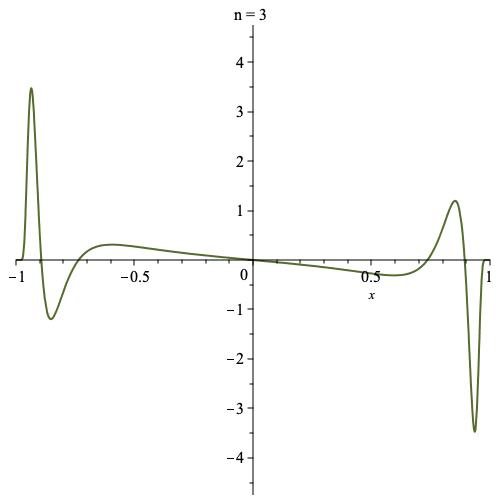}\hspace*{10pt}\includegraphics[width=120pt]{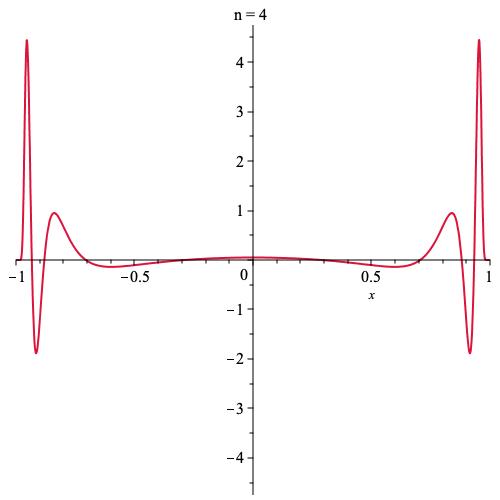}\hspace*{10pt}\includegraphics[width=120pt]{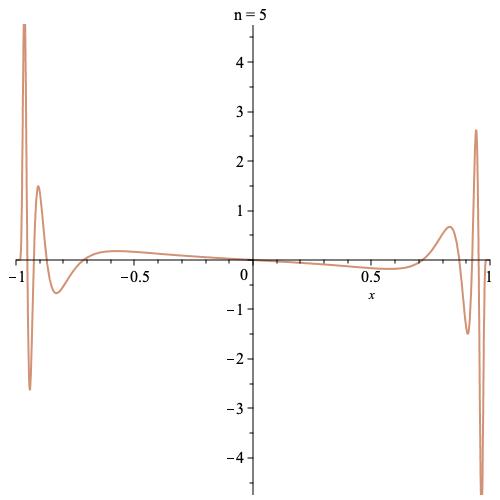}
    \caption{The functions $\varphi_n$, $n=0,\ldots,5$, for the $\varphi_0$ given by \R{eq:2.17}.}
    \label{fig:2.3}
  \end{center}
\end{figure}

Note that the $b_n$s are unknown (the $c_n$s vanish because of the symmetry of $\varphi_0$). Yet, in Section~3 we prove that it is enough to specify $\varphi_0$ to generate both $\Phi$ and $\{b_n\}_{n\in\bb{Z}+}$ with the {\em differential Lanczos algorithm.\/} Deferring a more detailed example to Subsection~\ref{sec:DLexamples}, we just comment upon Fig.~\ref{fig:2.3}, where the first six $\varphi_n$s are displayed, all to the same scale. It is evident that, while all the functions vanish (as required) at $\pm1$ and are smooth, they all share an essential singularity at the endpoints. The `compromise' between an essential singularity at $\pm1$, regularity inside and orthogonality appears to be a boundary layer, of an amplitude increasing with $n$, near the endpoints. For example, $\varphi_{10}(0.98356)\approx 8.9786443$, while $\max_{|x|\leq 0.7} |\varphi_{10}(x)|\approx 0.02886292$, a ratio exceeding 300. Functions like this have no `approximation power' and in all likelihood are useless as a practical tool. 

In general, T-functions appear to be of limited merit in the presence of zero Dirichlet boundary conditions. The reason is, paradoxically, that a tridiagonal $\DDD$ is too `nice': all its powers are bounded. A different approach to this problem is presented in \cite{iserles23ost}.
 
\setcounter{equation}{0}
\setcounter{figure}{0}
\section{The differential Lanczos algorithm}

\subsection{From $\varphi_0$ to $\Phi$}

We assume an inner product $\langle\,\cdot\,,\,\cdot\,\rangle$, acting on ${\cal H}\times{\cal H}$ and obeying the Integration-by-Parts property (IbP), $\Re(\langle f',g\rangle+\langle f,g'\rangle)=0$ for all $f,g\in\mathcal{H}$, and assume that  $\varphi_0$ is a seed function (as in Definition \ref{def:seed}) that lies on the unit sphere of $\mathcal H$. At the outset for simplicity's sake we restrict the narrative to $\mathcal{H}=\CC{L}_2(\BB{R})$, but this will be generalised later.

In Subsection~\ref{sec:Tsystems} we described a mechanism that, given $\varphi_0$, $\MM{b}=\{b_n\}_{n\in\bb{Z}_+}$ and $\MM{c}=\{c_n\}_{n\in\bb{Z}_+}$, generates the sequence $\varphi_1,\varphi_2,\ldots$ using the formula \R{eq:2.6}. Each $\varphi_n$ resides in the differential Krylov subspace ${\DD K}_{n+1}(\varphi_0)$, however, in general, the sequence $\Phi$ generated by \R{eq:2.6} fails to be orthonormal. The reason is that specifying {\em both\/} $\varphi_0$ {\em and\/} $\MM{b},\MM{c}$ overdetermines the system insofar as orthonormality is concerned. We have seen one aspect of this in Subsection~2.3: once $\MM{b},\MM{c}$ are given -- equivalently, in the format of the {\em Jacobi matrix\/},
\begin{displaymath}
  \mathcal{J}=\left[
  \begin{array}{ccccc}
         c_0 & b_0 & 0 \\
         b_0 & c_1 & b_1 & \ddots \\
         0 & b_1 & c_2 & \ddots\\
         & \ddots & \ddots & \ddots & \ddots 
  \end{array}
  \right],
\end{displaymath}
the Favard theorem yields an orthogonality measure $\D\mu$ such that the orthonormal polynomial system $\mathcal{P}=\{p_n\}_{n\in\bb{Z}_+}$ obeys the three-term recurrence relation \R{eq:2.7}, whereby
\begin{equation}
  \label{eq:3.1}
  \varphi_n(x)=\frac{(-\ii)^n}{\sqrt{2\pi}} \int_{-\infty}^\infty p_n(\xi) \sqrt{w(\xi)} \ee^{\ii x\xi}\D\xi,\qquad n\in\BB{Z}_+,
\end{equation}
where $\D\mu=w\D x$. In particular, it determines $\varphi_0$ uniquely. Alternatively, we can use \R{eq:3.1} just to derive $\varphi_0$ and generate the remaining $\varphi_n$s with \R{eq:2.6}. In addition, $\D\mu$ also determines $\MM{b}$ and $\MM{c}$ through the three-term recurrence relation for $\mathcal{P}$. 

The use of the Fourier transform and the mapping associated with the Favard theorem (which could perhaps be called a Favard transform) is only part of the story. There are three sets at play here:
\begin{enumerate}
	\item $\mathcal{S}$ is the set of all possible smooth seed functions $\varphi_0$ (as in Definition \ref{def:seed}) that satisfy $\|\varphi_0\|_{\CC{L}_2(\BB{R})} = 1$.
	\item $\GG{B}$ is the set of all Borel measures on $\BB{R}$, up to equvalence of moments. Here we need to exercise caution: the Favard theorem need not associate a {\em unique\/} Borel measure with a given three-term recurrence relation. A standard example of non-uniqueness is provided by the Stieltjes--Wigert measure \cite[p.~172--5]{chihara78iop}. Therefore the elements of $\mathcal{B}$ are either singleton sets with a single determinate measure, or an equivalence class of measures with equal moments (and hence equal three-term recurrence relations).
	\item $\DD J$ is the set of all irreducible Jacobi matrices, alternatively of the coefficients of a three-term recurrence relation coefficients \R{eq:2.7}. 
\end{enumerate}

\begin{figure}[ht!]
	\centering
    \begin{picture}(210,170)
      \thicklines
      \put (-12,10) {$\GG{B}$}
      \put (10,21) {\vector(1,0){180}}
      \put (190,14) {\vector(-1,0){180}}
      \put (195,10) {${\DD J}$}
      \put (-6,28) {\vector(2,3){90}}
      \put (90,160) {\vector(-2,-3){88}}
       \put (195,23) {\vector(-2,3){90}} 
      \put (110,162) {\vector(2,-3){88}} 
      \put (94,166) {$\mathcal{S}$}
      \put (60,-2) {\small Favard's theorem}
      \put (55,28) {\small Polynomial Lanczos}
      \put (-5,50) {\rotatebox{56}{\small Inverse Fourier transform}}
      \put (29,55) {\rotatebox{56}{\small Fourier transform}}
      \put (143,123) {\rotatebox{304}{\small Differential Lanczos}}
    \end{picture}
    \caption{The commuting triangle of mappings between the set of seeds $\mathcal{S}$, the set of determinate Borel measures $\mathcal{B}$ and the set of non-separable Jacobi matrices $\DD J$.}\label{fig:triangle}
\end{figure}
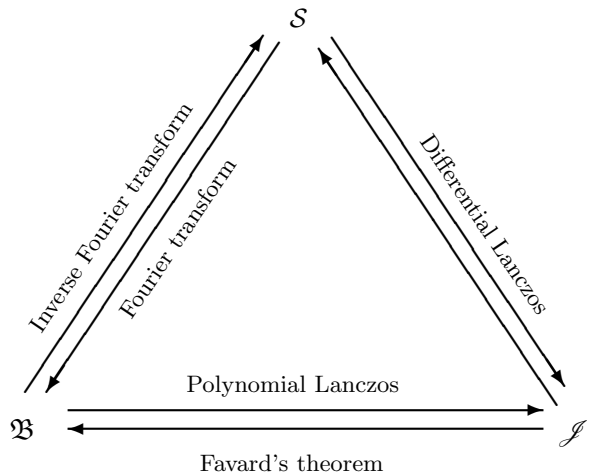

The arrow from $\GG{B}$ to $\DD J$ is classical; it can be realised by the Lanczos iteration applied to the operator $f(x) \mapsto x f(x)$ starting from an initial function $1$ (see Table 8 of \cite{dubbs2015infinite}). Its inverse is the Favard theorem: while its standard proof can be considered as `constructive'  (at a stretch) \cite[p.~21--22]{chihara78iop}, the implied algorithm via a weak limit of discrete measures is impractica. \citeasnoun{colbrook2021computing} present a numerically robust algorithm that constructs  an approximate Borel measure corresponding to a given (infinite) Jacobi matrix.. 

The arrows between $\mathcal S$ and $\GG{B}$ are provided by the Fourier transform and are implicit in the material of Subsection~2.3. This leaves out the arrow from $\mathcal S$ to $\DD J$, which we will discuss below.

\begin{definition}
	Let $\varphi_0$ be a seed function as in Definition \ref{def:seed} and let $\langle \cdot, \cdot \rangle$ be an inner product on $ \bigcup_{n \in \BB{Z}_+}\mathcal{K}_n(\varphi_0)$. We define the \emph{differential Hilbert space}
	$$
	\mathcal{H}(\varphi_0) := \overline{\CC{span}\{\varphi_0^{(n)}\}_{n \in \BB{Z}_+}},
	$$ 
	where the closure is taken with respect to the norm $\|\cdot\|$ induced by the inner product.
\end{definition}

\begin{algorithm}
	\begin{algorithmic}[1]
		\Require A seed function $\varphi_0$ and its associated differential Hilbert space $\mathcal{H}(\varphi_0)$, where the inner product $\langle \cdot, \cdot \rangle$ satisfies the IbP property. $\|\varphi_0\| = 1$
		\Ensure $\Phi = \{ \varphi_n\}_{n=0}^\infty$, $\{b_n\}_{n=0}^\infty$, $\{c_n\}_{n=0}^\infty$.
		\State $b_{-1}\varphi_{-1} \equiv 0$
		\For{$n = 0,1,2,\ldots $}
		\State $c_n = \ii \langle  \varphi_n',  \varphi_n \rangle$
		\State $\psi_{n+1} = \varphi_n' + b_{n-1} \varphi_{n-1} - \ii c_n \varphi_n$
		\State $b_{n} = \| \psi_{n+1} \|$
        \State $\varphi_{n+1} = b_{n}^{-1}\psi_{n+1}$
		\EndFor
	\end{algorithmic}\caption{The Differential Lanczos Algorithm}\label{alg:lanczos}
\end{algorithm}

In deference to the familiar {\em Lanczos algorithm\/} from numerical linear algebra \cite[p.~477]{golub96mc}, we call this the {\em differential Lanczos algorithm.\/} It is merely the Lanczos algorithm applied to the operator $\ii \frac{\CC{d}}{\CC{d}x}$. The IbP property ensures that this operator is Hermitian, or else the Lanczos algorithm is invalid.

\begin{theorem}\label{thm:lanczos_works}
 Algorithm \ref{alg:lanczos} generates the sequences $\Phi=\{\varphi_n\}_{n=0}^
\infty$, $\mathbf{b} = \{b_n\}_{n=0}^\infty$ and $\mathbf{c} = \{ c_n\}_{n=0}^\infty$ such that for all $n = 1,2,\ldots$,
	\begin{enumerate}[(i)]
		\item $\{ \varphi_0, \ldots, \varphi_{n}\}$ is an orthonormal basis for $\mathcal{K}_{n}(\varphi_0)$
		\item $ \varphi_{n-1}'(x) = -b_{n-2} \varphi_{n-2}(x) + \ii c_{n-1} \varphi_{n-1}(x) + b_{n-1} \varphi_{n}(x)$
	\end{enumerate}
	with $b_{-1}\varphi_{-1} \equiv 0$ ensuring the case $n = 1$ is well-defined.
\end{theorem}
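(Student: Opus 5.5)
The proof is an induction on $n$, following the classical argument for the Lanczos algorithm. Two ingredients replace the usual matrix facts: the IbP property, which makes $\ii\frac{\D}{\D x}$ Hermitian on $\bigcup_n\DD{K}_n(\varphi_0)$, and the seed property of Definition~\ref{def:seed}, which guarantees $\dim\DD{K}_n(\varphi_0)=n$ and hence that the algorithm never breaks down.

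The space spanned by $\varphi_0,\ldots,\varphi_n$ ($n+1$ functions) is the $(n+1)$-dimensional space $\DD{K}_{n+1}(\varphi_0)$, so I read (i) with that indexing. I will carry the following inductive hypothesis: $\varphi_0,\ldots,\varphi_n$ are orthonormal; $\CC{Span}\{\varphi_0,\ldots,\varphi_n\}=\DD{K}_{n+1}(\varphi_0)$; and $\varphi_n$ has a nonzero coefficient of $\varphi_0^{(n)}$ in its expansion. The base case $n=0$ is just $\|\varphi_0\|=1$.

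\emph{Step 1: skew-adjointness.} Apply IbP to the pair $(f,\ii g)$. Since the inner product is conjugate-linear in the second slot, this gives $\Re\bigl(-\ii(\langle f',g\rangle+\langle f,g'\rangle)\bigr)=\Im(\langle f',g\rangle+\langle f,g'\rangle)=0$. Combined with \R{eq:2.1}, this yields $\langle f',g\rangle=-\langle f,g'\rangle$ for all $f,g$ in the Krylov spaces, which are closed under differentiation.

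In particular, $\langle\varphi_n',\varphi_n\rangle$ is purely imaginary, so $c_n$ is real and $\ii c_n=\langle\varphi_n',\varphi_n\rangle$. This is the normalisation consistent with (ii) and \R{eq:2.5}. The sign in line~3 of Algorithm~\ref{alg:lanczos} must be read with this convention; that is, $c_n$ is defined so that this identity holds.

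\emph{Step 2: the new vector is nonzero.} Since $\varphi_n$ contains $\varphi_0^{(n)}$ with a nonzero coefficient, $\varphi_n'$ contains $\varphi_0^{(n+1)}$ with the same coefficient. The remaining terms of $\psi_{n+1}$ lie in $\DD{K}_{n+1}(\varphi_0)$. Because $\varphi_0$ is a seed, $\varphi_0,\ldots,\varphi_0^{(n+1)}$ are linearly independent (the cited \cite[Thm 2.4]{olver2009gmres}). Hence $\psi_{n+1}\notin\DD{K}_{n+1}(\varphi_0)$, so $b_n=\|\psi_{n+1}\|>0$ and $\varphi_{n+1}$ is well defined. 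Its $\varphi_0^{(n+1)}$-coefficient is nonzero, and $\CC{Span}\{\varphi_0,\ldots,\varphi_{n+1}\}=\DD{K}_{n+2}(\varphi_0)$.

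\emph{Step 3: orthogonality.} It suffices to show $\langle\psi_{n+1},\varphi_k\rangle=0$ for $k\le n$. By Step~1, $\langle\varphi_n',\varphi_k\rangle=-\langle\varphi_n,\varphi_k'\rangle$. Since $\varphi_k'\in\DD{K}_{k+2}(\varphi_0)=\CC{Span}\{\varphi_0,\ldots,\varphi_{k+1}\}$, this vanishes for $k\le n-2$, as do the other two terms of $\psi_{n+1}$.

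For $k=n-1$, the inductive identity (ii) for $\varphi_{n-1}'$ gives $\langle\varphi_n',\varphi_{n-1}\rangle=-\langle\varphi_n,\varphi_{n-1}'\rangle=-b_{n-1}$, using that $b_{n-1}$ is real. This cancels exactly against the term $b_{n-1}\langle\varphi_{n-1},\varphi_{n-1}\rangle$.

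For $k=n$, we get $\langle\varphi_n',\varphi_n\rangle-\ii c_n=0$ by the definition of $c_n$.

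Normalising gives orthonormality. Rearranging line~4 of Algorithm~\ref{alg:lanczos} with $\psi_{n+1}=b_n\varphi_{n+1}$ gives (ii) for index $n+1$, which closes the induction.

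The main obstacle is Step~2, the non-breakdown $b_n\neq0$. This is exactly where the seed hypothesis enters. I would handle it through the leading-coefficient bookkeeping above, in the spirit of \R{eq:2.6}, rather than through dimension counting alone.

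A secondary point is to check that IbP is legitimately applicable to all elements of $\bigcup_n\DD{K}_n(\varphi_0)$ in $\mathcal{H}(\varphi_0)$. This is immediate because these spaces are closed under differentiation and IbP is assumed on them.
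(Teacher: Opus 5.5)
Your proof is correct and follows essentially the same route as the paper's: induction on $n$, non-breakdown ($b_n\neq0$) from the seed property, and orthogonality of $\psi_{n+1}$ to $\varphi_n$, $\varphi_{n-1}$ and $\varphi_j$ ($j\le n-2$) via $\langle f',g\rangle=-\langle f,g'\rangle$ together with the three-term structure. Your proof also makes explicit several points the paper's proof glosses over: you derive full skew-adjointness from the real-part-only IbP condition by testing with $(f,\ii g)$; you read (i) as $\mathcal{K}_{n+1}(\varphi_0)$; you fix the sign so that $\ii c_n=\langle\varphi_n',\varphi_n\rangle$, which the paper's own computation tacitly uses, contrary to line~3 as printed; and you give the leading-coefficient argument for $b_n\neq0$.
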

\begin{proof}
		Let us prove the result by induction. We start with the base case $n = 1$. We have by line 4,
	\begin{equation}
		\psi_1 = \varphi_0' - \ii c_0 \varphi_0.
	\end{equation} 
	This function is not identically zero because $\varphi_0' \in \mathcal{K}_1(\varphi_0) \setminus \mathcal{K}_0(\varphi_0)$ by Definition \ref{def:seed}, and $\varphi_0 \in \mathcal{K}_0(\varphi_0)$. This implies that $b_0$ is non-zero (line 5), so $\varphi_1$ is well-defined and normalized by line 6 of the algorithm. Furthermore, property $(ii)$ is satisfied by combining lines 4, 5 and 6 of the algorithm. Finally, $\varphi_1$ is orthogonal to $\varphi_0$ because:
	\begin{Eqnarray*}
		\langle \varphi_1, \varphi_0 \rangle &=& b_0^{-1} \langle \varphi_0' - \ii c_0 \varphi_0, \varphi_0 \rangle \\
		 &=& b_0^{-1} \left( \langle \varphi_0', \varphi_0\rangle - \ii c_0 \right) \\
		 &=& 0,
	\end{Eqnarray*}
	by the definition of $c_0$.
	
	Now we assume that properties $(i)$ and $(ii)$ hold for a given $n$, and wish to show it holds when $\varphi_{n+1}$ is generated by the Algorithm \ref{alg:lanczos}. Just as in the base case, $\psi_{n+1}$ is not identically zero, because $\varphi_n' \in \mathcal{K}_{n+1}(\varphi_0) \setminus \mathcal{K}_n(\varphi_0)$ by Definition \ref{def:seed}, and $b_{n-1}\varphi_{n-1} - \ii c_n \varphi_n \in \mathcal{K}_n(\varphi_0)$ by the inductive hypothesis. Therefore, $b_n$ is non-zero and $\varphi_{n+1}$ is well-defined and normalized. As in the base case, property $(ii)$ is satisfied by combining lines 4, 5 and 6 of the algorithm.
	
	To complete the proof, we need to show that $\psi_{n+1}$ is orthogonal to $\mathcal{K}_n(\varphi_0)$, for then $\{\varphi_0, \varphi_1,\ldots,\varphi_{n+1}\}$ forms an orthonormal basis for $\mathcal{K}_{n+1}(\varphi_0)$ by combining this fact with the inductive hypothesis. Now, note that both parts of the inductive hypothesis imply $\langle \varphi_{j}', \varphi_n \rangle = 0$ for $j = 0,1,\ldots,n-2$ and $\langle \varphi_{n-1}', \varphi_n \rangle = b_{n-1}$. Also, the IbP property implies that $\langle f',g \rangle = - \langle f, g'\rangle$ for all $f,g \in \mathcal{H}(\varphi_0)$. Using these facts, we show the following.
	\begin{displaymath}
		\langle \psi_{n+1} , \varphi_{n}\rangle = \langle \varphi_n' + b_{n-1}\varphi_{n-1} - \ii c_n \varphi_n, \varphi_n \rangle = \ii c_n - 0 - \ii c_n = 0.
	\end{displaymath}
	\begin{eqnarray*}
		\langle \psi_{n+1}, \varphi_{n-1} \rangle &=&\langle \varphi_n', \varphi_{n-1} \rangle + b_{n-1} \langle \varphi_{n-1}, \varphi_{n-1}\rangle - \ii c_n \langle \varphi_n,\varphi_{n-1} \rangle \\
		&=& - \langle \varphi_{n}, \varphi_{n-1}' \rangle + b_{n-1} - 0 \\
		&=& -b_{n-1} + b_{n-1} = 0.
	\end{eqnarray*}
	For all $j =0,1,\ldots,n-2$,
	\begin{eqnarray*}
		\langle \psi_{n+1}, \varphi_{j} \rangle &=&\langle \varphi_n', \varphi_{j} \rangle + b_{n-1} \langle \varphi_{n-1}, \varphi_j\rangle - \ii c_n \langle \varphi_n, \varphi_j \rangle \\
		&=&  -\langle  \varphi_n, \varphi_j' \rangle + 0 - 0 \\
		&=& 0.
	\end{eqnarray*}
	This completes the proof.
\end{proof}

Note that this whole procedure was perfectly abstract. At no point did we require that the Hilbert space is a subspace of $\mathrm{L}_2(\BB{R})$, and indeed we will show some examples where this setting is not used. Furthermore, it appears possible to use a degenerate, symmetric bilinear (or Hermitian sesquilinear) product instead of an inner product, as long as $b_n \neq 0$ for all $n$. Indeed, non-degeneracy was only used to prove that $b_n \neq 0$. However, the IbP property was certainly necessary, which is a limitation. We show in Section 4 that the approach can be generalised for degenerate, Hermitian sesquilinear forms that do not satisfy the IbP property.

\subsection{Examples}\label{sec:DLexamples}

We present in this section six different examples in which T-functions are derived using the differential Lanczos algorithm. In five of these examples the triangle in Figure \ref{fig:triangle} is valid, hence we have the alternative of either applying the differential Lanczos algorithm or using a Fourier transform (or series), followed by the Favard theorem (that is, moving round the commutative triangle in the opposite direction). In Example~4, though, this alternative does not exist and the differential Lanczos algorithm is the only currently known means to construct T-functions.

\subsubsection{Example 1: Hermite functions}

Hermite functions
\begin{equation}
  \label{eq:3.3}
  \varphi_n(x)=(-1)^n\frac{\ee^{-x^2/2}}{\sqrt{2^nn!\sqrt{\pi}}} \CC{H}_n(x),\qquad n\in\BB{Z}_+,
\end{equation}
where $\CC{H}_n$ is the standard Hermite polynomial, are the oldest and most well-known example of T-functions. They obey the differential recurrence \R{eq:2.5} with $b_n=\sqrt{(n+1)/2}$ and $c_n\equiv0$. 

Let us work through a couple of steps of Algorithm \ref{alg:lanczos} by hand. We
\begin{Eqnarray*}
	c_0 &=& \ii \langle \varphi_0' \varphi_0\rangle = -\frac{\ii}{\sqrt{\pi}} \int_{-\infty}^\infty x \ee^{-x^2} \D x = 0 \\
	\psi_1 &=& \varphi'_0 - \ii c_0 \varphi_0 = \varphi'_0 = - \pi^{-1/4}  x \ee^{-x^2 / 2} \\
	b_0 &=& \|\psi_1\| = \frac{1}{\sqrt{2}} \\
	\varphi_1 &=& \sqrt{2} \psi_1 = - \sqrt{2} \pi^{-1/4} x \ee^{-x^2 / 2} \\
	c_1 &=&  \ii \langle \varphi_1', \varphi_1\rangle = 0 \text{ (by odd symmetry, as for $c_0$)} \\
	\psi_2 &=& \varphi'_1 + b_0\varphi_0 - \ii c_1 \varphi_1 \\
	            &=& \sqrt{2} \pi^{-1/4}(x^2-1) \ee^{-x^2 / 2} +  \pi^{-1/4}2^{-1/2} \ee^{-x^2 / 2} \\
	           &=& \pi^{-1/4}2^{-1/2} (2x^2 - 1) \ee^{-x^2 / 2} \\
	 b_1 &=& \|\psi_2\| = 1 \\
	 \varphi_2 &=&  \pi^{-1/4}2^{-1/2} (2x^2 - 1) \ee^{-x^2 / 2}.
\end{Eqnarray*}
and so on.

\subsubsection{Example 2: Malmquist--Takenaka functions}

We set 
\begin{displaymath}
  \varphi_0(x)=\sqrt{\frac{2}{\pi}}\frac{1}{1-2\ii x},\qquad x\in\BB{R}. 
\end{displaymath}
Our contention is that underlying T-system is
\begin{displaymath}
  \varphi_n(x)=\sqrt{\frac{2}{\pi}} \ii^n \frac{(1+2\ii x)^n}{(1-2\ii x)^{n+1}}
\end{displaymath}
and $b_n=n+1$, $c_n=2n+1$. This is the {\em Malmquist--Takenaka system\/} \cite{iserles20for} where, to obtain a basis of $\CC{L}_2(\BB{R})$, we need to take $n\in\BB{Z}$. Here we just consider $n\in\BB{Z}_+$, commenting that the remaining values can be obtained running the recursion backwards from $\varphi_{-1}=\overline{\varphi}_0$. 

Assuming that $\varphi_0,\ldots,\varphi_n$ have been already computed, together with $b_m,c_m$ for $m\leq n-1$, we commence by computing $c_n$ from line 3 of Algorithm \ref{alg:lanczos}. We have
\begin{displaymath}
  c_n=\Im \int_{-\infty}^\infty \varphi_n'(x)\overline{\varphi_n(x)}\D x=\Im \frac{4\ii}{\pi} \int_{-\infty}^\infty (2n+1+2\ii x) \frac{\D x}{(1+4x^2)^2}.
\end{displaymath}
But
\begin{displaymath}
  \int_{-\infty}^\infty \frac{\D x}{(1+4x^2)^2}=\frac{\pi}{2},\qquad \int_{-\infty}^\infty \frac{x\D x}{(1+4x^2)^2}=0
\end{displaymath}
and we recover $c_n=2n+1$. 

Now let us evaluate $b_n$. We have
\begin{displaymath}
	\psi_{n+1} = \varphi_n' + b_{n-1} \varphi_{n-1} - \ii c_n \varphi_n.
\end{displaymath}
Rather than find the norm of this directly, we can use the Pythagorean Theorem on the orthgonal functions $\psi_{n+1}$, $\varphi_n$ and $\varphi_{n-1}$ to obtain
\begin{displaymath}
	\|\psi_{n+1}\|^2 = \|\varphi_n'\|^2 - b_{n-1}^2 - c_n^2.
\end{displaymath}
Next, it is a straightforward integral and simple algebra confirms that $\|\varphi_n'\|^2=6n^2+6n+2$. This in turn yields $b_n=n+1$ and the recursive step is done. 

\subsubsection{Example 3: Periodic boundary conditions}

We commence from the periodic function
\begin{equation}
  \label{eq:3.4}
  \varphi_0(x)=\sqrt{\frac{3}{10\pi}} \frac{3}{5-4\cos x},\qquad |x|\leq\pi,
\end{equation}
normalised so that $\|\varphi_0\|_{\CC{L}_2(-\pi,\pi)}=1$, and generate $\varphi_1,\varphi_2,\ldots$ recursively using Algorithm~1. Along the way we also compute the $b_n$s -- of course, $c_n\equiv0$, since $\varphi_0$ is an even function. We obtain\\[4pt]
\begin{tabular}{ll}
  $\displaystyle b_0=\frac{2\sqrt{2}}{3},$ & $\displaystyle \varphi_1(x)=-\frac{9\sqrt{15}}{5\sqrt{\pi}} \frac{\sin x}{(5-4\cos x)^2};$\\
  $\displaystyle b_1=\frac73,$ & $\displaystyle \varphi_2(x)=\frac{3\sqrt{15}}{35\sqrt{\pi}} \frac{122-125\cos x-4\cos^2x}{(5-4\cos x)^3};$\\
  $\displaystyle b_2=\frac{4\sqrt{33}}{7},$ & $\displaystyle \varphi_3(x)=-\frac{18\sqrt{55}}{55\sqrt{\pi}} \frac{(86-125\cos x+32\cos^2x)\sin x}{(5-4\cos x)^4};$\\
  $\displaystyle b_3=\frac{2\sqrt{2556345}}{693}$, & $\displaystyle \varphi_4(x)=$ \footnotesize$\displaystyle \frac{81\sqrt{46439}}{542255\sqrt{\pi}} \frac{34633\!-\!85000\cos x\!+\!57990\cos^2x\!-\!5000\cos^3x\!-\!2560\cos^4x}{(5-4\cos x)^5}$
\end{tabular}
and so on. The  general pattern is
\begin{displaymath}
  \varphi_{2n}(x)=\frac{r_{2n}(\cos x)}{(5-4\cos x)^{2n+1}},\qquad \varphi_{2n+1}(x)=\frac{s_{2n}(\cos x)\sin x}{(5-4\cos x)^{2n+2}},
\end{displaymath}
where $r_m$ and $s_m$ are polynomials of degree $m$ that obey the recurrences
\begin{Eqnarray*}
  b_{2n-1}r_{2n}(y)&=&b_{2n-2}(5-4y)^2 r_{2n-2}(y)-(1-y^2)(5-4y)s_{2n-2}'(y)\\
  &&\hspace*{15pt}\mbox{}-[8n-5\cos x-4(2n-1)\cos^2x] s_{2n-2}(y),\\
  b_{2n}s_{2n}(y)&=&-(5-4y)r_{2n}'(y)+b_{2n-1}(5-4y)^2 s_{2n-2}(y)-4(2n+1)r_{2n}(y).
\end{Eqnarray*}

\begin{figure}[htb]
  \begin{center}
    \includegraphics[width=250pt,height=200pt]{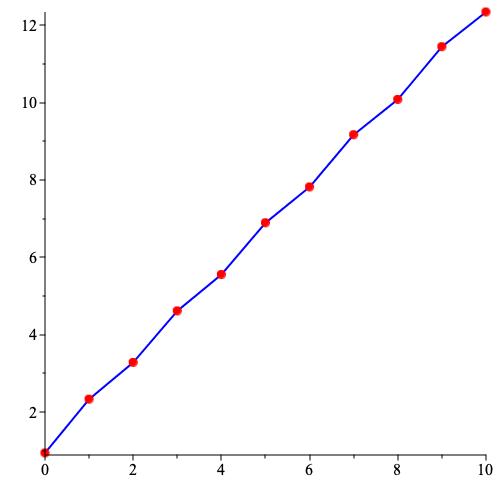}
    \caption{$b_n$ for $n=0,\ldots,10$ and the seed \R{eq:3.4}.}
    \label{fig:3.1}
  \end{center}
\end{figure}

It is fairly straightforward to produce $\{\varphi_n\}_{n=0}^N$ for any reasonable value of $N$. The first coefficients $b_n$ are displayed in Fig.~\ref{fig:3.1} and we note that they increase linearly. Note that we do not know what is the Borel measure associated with the $b_n$s (and $c_n\equiv0$), but this is an advantage of the differential Lanczos algorithm: all we need is a seed and an inner product obeying the IbP property.

\subsubsection{Example 4: Essential singularities}

We return to the setting of Subsection~2.5 and the seed \R{eq:2.17}. This is an example of a situation whereby there is no Fourier transformation taking \R{eq:2.5} to a three-term recurrence relation for orthonormal polynomials, yet the differential Lanczos algorithm can be used to construct $\Phi$ in a recursive manner. In particular, it has been used to derive the $\varphi_n$s in Fig.~\ref{fig:2.3}. 

\begin{figure}[htb]
  \begin{center}
    \includegraphics[width=250pt,height=200pt]{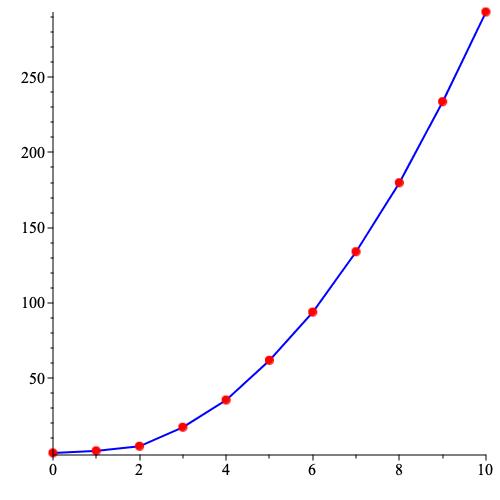}
    \caption{$b_n$ for $n=0,\ldots,10$ and the seed \R{eq:2.17}.}
    \label{fig:3.2}
  \end{center}
\end{figure}

Fig.~\ref{fig:3.2} displays $b_n$, $n=0,\ldots,10$, for the seed \R{eq:2.17} (because $\varphi_0$ is even, we have $c_n\equiv0$). The rate of increase appears to be quadratic. Although the underlying Borel measure is unknown, $\lim b_n=+\infty$ implies, using the Blumenthal theorem \cite{chihara78iop} that it is supported on all of $\BB{R}$.

The differential Lanczos algorithm is considerably more than a computational procedure, whereby each turn of the handle delivers a new $\varphi_{n+1}$, together with $b_n$ and $c_n$. It can also allow us to obtain a great deal of information on the $\varphi_n$s. We have seen it in the previous example.  Insofar as the seed \R{eq:2.5} is concerned, the formal recursion of the differential Lanczos algorithm yields the form
\begin{equation}
  \label{eq:3.13}
  \varphi_n(x)=\frac{r_n(x)}{(1-x^2)^{2n}} \exp\!\left(-\frac{1}{1-x^2}\right)\!,\qquad n\in\BB{Z}_+,
\end{equation}
where each $r_n$ is a polynomial of degree $\rho_n$ and of parity $n$. We have the recurrence
\begin{Eqnarray*}
  r_0&\approx&2.7413264626,\qquad r_1(x)\approx-3.452446691 x,\\
  r_n(x)&=&\frac{1}{b_n} \{ b_{n-1}(1-x^2)^4 r_{n-2}(x)+2[(2n-3)-2(n-1)x^2] xr_{n-1}(x) \\
  &&\hspace*{20pt}\mbox{}+(1-x^2)^2 r_{n-1}'(x)\}.
\end{Eqnarray*}
It follows that
\begin{equation}
  \label{eq:3.14}
  \rho_n\leq\max\{ \rho_{n-2}+8,\rho_{n-1}+3\},\qquad n\geq2,
\end{equation}
with $\rho_0=0$, $\rho_1=1$. Extensive numerical experimentation indicates that $\rho_{2n}=4n$ and $\rho_{2n+1}=4n+1$ -- thus, \R{eq:3.14} holds as an equality for even $n$. When $n$ is odd the highest-order terms cancel and the degree $\rho_n$ drops. 

\subsubsection{Example 5: A Sobolev inner product}

In \cite{iserles23sos} we considered the nonstandard Sobolev inner product,
\begin{displaymath}
  \langle f,g\rangle=\int_{-\infty}^\infty \hat{f}(\xi)\overline{\hat{g}(\xi)}(1+\xi^2)\ee^{-\xi^2}\!\D \xi,
\end{displaymath}
where $\hat{\cdot}$ denotes the continuous Fourier transform on the real line. Therein we derived the T-functions $\Phi$ {\em via\/} the Fourier transform, along the lines sketched in Section~2.3. Here we outline the alternative approach that uses the differential Lanczos algorithm. We commence from the seed
\begin{displaymath}
  \varphi_0(x)=\frac{\sqrt{6}}{3\pi^{1/4}} \ee^{-x^2/2}.
\end{displaymath}
The coefficient in front ensures that $\|\varphi_0\|^2=1$ in the Sobolev norm. Using Algorithm~1 and bearing in mind that $c_n\equiv0$ because the seed is an even function, we generate the pairs $\{\varphi_{n+1},b_n\}$ for $n=1,2,\ldots$\@ The outcome is the same as in \cite{iserles23sos}, where the first few $\varphi_n$s are listed and plotted, except that the point of departure in that paper was (in the notation of Section~2.3) $w(\xi)=(1+\xi^2)\ee^{-\xi^2}$ and $v(\xi)=1+\xi^2$. Using the differential Lanczos algorithm, though, neither the knowledge of $w$ nor of the orthonormal polynomial system with respect to $w(\xi)\D\xi$ are required -- just the seed and the nature of the Sobolev inner product. 

\subsubsection{Example 6: Schr\"odinger evolutions}

The solution of the {\em  linear Schr\"odinger equation in semiclassical coordinates,\/}
\begin{equation}
  \label{eq:3.7}
  \ii\varepsilon \frac{\partial u}{\partial t}=-\varepsilon^2\frac{\partial^2u}{\partial x^2}+V(x) u,\qquad x\in\BB{R},\quad t\geq0,
\end{equation}
where $0<\varepsilon\ll1$ is a small parameter (the square root of the ratio between the mass of an electron and that of a nucleus or several nuclei) and $V$ the interaction potential, can be composed, using splitting methods, from the solutions of two `split' equations,
\begin{equation}
   \label{eq:3.8}
    \ii\varepsilon \frac{\partial u}{\partial t}=-\varepsilon^2\frac{\partial^2u}{\partial x^2},\qquad x\in\BB{R},\quad t\geq0,
\end{equation}
the {\em free Schr\"odinger equation\/}, and the linear ODE
\begin{displaymath}
  \ii\varepsilon \frac{\partial u}{\partial t}=-V(x)u, \qquad x\in\BB{R},\quad t\geq0,
\end{displaymath}
whose solution is $u(x,t)=\exp\left(\ii t\varepsilon^{-1} V(x)\right) u(x,0)$. The solution of the equation \R{eq:3.8} represents more of a challenge, because naive numerical methods on the real line (e.g.\ finite differences or finite elements) require either an infinite number of degrees of freedom or an arbitrary truncation of the infinite space interval. However, as it is easy to verify directly, $u$ is an inverse Fourier transform of  `twisted' Fourier transform of the initial value,
\begin{equation}
  \label{eq:3.9}
  u(x,t)=\frac{1}{\sqrt{2\pi}} \int_{-\infty}^\infty \hat{u}(\xi,0)\ee^{\ii\xi^2\varepsilon t+\ii x\xi}\D\xi,\qquad x\in\BB{R},\quad t\geq0. 
\end{equation}

Spectral methods have a critical advantage once, as in our case, the spatial domain is infinite: in principle, all we need is to expand
\begin{equation}
  \label{eq:3.10}
  u(x,0)=\sum_{n=0}^\infty a_n\varphi_n(x)
\end{equation}
in an orthonormal system $\Phi$ (in practice, of course, we truncate the expansion) and use one of many spectral methods \cite{hesthaven07smt} to evolve the $a_n$s so that $u(x,t)=\sum_{n=0}^\infty \tilde{a}_n(t)\varphi_n(x)$ (of course, $\tilde{a}_n(0)=a_n(0)$).  An alternative approach is described in \cite{iserles23slm}: we expand the initial condition like in \R{eq:3.9} but {\em instead of evolving the $a_n$s we evolve the $\varphi_n$s!\/} In other words,
$u(x,t)=\sum_{n=0}^\infty a_n \chi_n(x,t)$, where, using \R{eq:3.9},
\begin{displaymath}
  \chi_n(x,t)=\frac{1}{\sqrt{2\pi}} \int_{-\infty}^\infty \hat{\varphi}_n(\xi) \ee^{\ii\xi^2\varepsilon t+\ii x\xi}\D\xi,\qquad x\in\BB{R},\quad t\geq0. 
\end{displaymath}

The underlying inner product being $\CC{L}_2(\BB{R})$, we are within the setting of Subsection~2.3. Thus, assuming that $\Phi$ is a T-function system,
\begin{Eqnarray*}
  \varphi_n(x)&=&\frac{\ii^n}{\sqrt{2\pi}} \int_{-\infty}^\infty p_n(\xi)\sqrt{w(\xi)}\ee^{\ii x\xi}\D\xi,\qquad n\in\BB{Z}_+,\\
  \chi_n(x,t)&=&\frac{\ii^n}{\sqrt{2\pi}} \int_{-\infty}^\infty p_n(\xi)\sqrt{\ee^{\ii\xi^2\varepsilon t}w(\xi)}\ee^{\ii x\xi}\D\xi,
\end{Eqnarray*}
where $p_n$ is the $n$th orthonormal polynomial with respect to the weight function $w$. By construction, also $\Xi(t)=\{\chi_n(\,\cdot\,,t)\}_{n\in\bb{Z}_+}$ is a T-system for every $t\geq0$: as a matter of fact, it has been proved in \cite{iserles23slm} that its differentiation matrix is exactly the same as that of $\Phi$. Moreover, the explicit form of the $\chi_n$s has been derived there in the case of Hermite functions \R{eq:3.3}, specifically
\begin{Eqnarray*}
  \chi_n(x,t)&=&\frac{\ii^n}{\sqrt{2^nn!\sqrt{\pi}(1-2\ii\varepsilon t}} \left(\frac{2\ii\varepsilon t+1}{2\ii\varepsilon t-1}\right)^{\!n/2} \exp\!\left(\frac12 \frac{x^2}{2\ii \varepsilon t-1}\right)\!\CC{H}_n\!\left(\frac{x}{\sqrt{1+4\varepsilon t^2}}\right)\\
  &=&\sqrt{\frac{(1+2\ii\varepsilon t)^n}{(1-2\ii\varepsilon t)^{n+1}}} \exp\!\left(-\frac{\ii t\varepsilon x^2}{1+4\varepsilon^2 t^2}\right)\! \varphi_n\!\left(\frac{x}{\sqrt{1+4\varepsilon^2 t^2}}\right)\!,\qquad n\in\BB{Z}_+.\hspace*{30pt}
\end{Eqnarray*}

The triangle in Figure \ref{fig:triangle} applies in the present case. While \R{eq:3.10} has been derived in \cite{iserles23slm} using Fourier transforms (and a great deal of extra algebra), an alternative is to use the differential Lanczos algorithm, commencing from the seed
\begin{displaymath}
  \chi_0(x,t)=\frac{1}{\sqrt{(1-2\ii\varepsilon t)\sqrt{\pi}}} \exp\!\left(\frac12 \frac{x^2}{2\ii \varepsilon t-1}\right)\!.
\end{displaymath}

The differential Lanczos algorithm  is identical to Example~1 (and yields the same sequence $b_0,b_1,\ldots$, while $c_n\equiv0$), except that the algebra is somewhat more complicated. The purpose of the current example is to demonstrate that a complex-valued weight function makes sense within our setting. 

\setcounter{equation}{0}
\setcounter{figure}{0}
\section{H-systems: systems with Hessenberg differentiation matrices}

\subsection{Sesquilinear forms}

Let $\llangle\,\cdot\,,\,\cdot\,\rrangle$ be a symmetric bilinear (or Hermitian sesquilinear) form over some real (or complex) Banach space. To be clear, our convention will be linearity in the first argument and conjugate linearity in the second argument. As we have already commented towards the end of Subsection~3.1, if the form satisfies the IbP property then the differential Lanczos algorithm remains valid, although unless the sesquilinear form is positive definite there is no guarantee that the $b_n$s are always positive. It is possible to obtain $b_n=0$, whereby the process terminates. Let us examine some examples that do not sastisfy the IbP property.

As an example, let us consider again the linear Schr\"odinger equation \R{eq:3.7} given on the real line, except that for simplicity we set $\varepsilon=1$ (the so-called ``atomistic coordinates''). The equation has the Hamiltonian
\begin{displaymath}
  H[u](x,t) = \int_{-\infty}^\infty\left[\left|\frac{\partial u(x,t)}{\partial x}\right|^2+V(x)|u(x,t)|^2\right]\!\D x,
\end{displaymath}
which stays constant for $t\geq0$, i.e.\ is determined solely by the initial condition. In other words, $\llangle u(\,\cdot\,,t),u(\,\cdot\,,t)\rrangle_V=\mbox{const}$, where
\begin{equation}
  \label{eq:4.1}
  \llangle v,w\rrangle_V=\int_{-\infty}^\infty [V(x)v(x)\overline{w(x)}+v'(x)\overline{w'(x)}]\D x.
\end{equation}
Note that \R{eq:4.1} is positive definite if $V(x)\geq0$, $x\in\BB{R}$, but this is not always true: an important example is the Lennard-Jones potential $V(x)=(\sigma/x)^{12}-(\sigma/x)^{6}$ for some $\sigma>0$, modelling the interaction of electronically neutral atoms or molecules, where $\sigma$ is the distance of the interacting particles.

The solution of \R{eq:3.7} conserves both the energy of the system (i.e.~the $\CC{L}_2$ norm of the solution) and Hamiltonian energy. In an ideal situation, we would wish to design a system $\Phi$ which is orthonormal with respect to both sesquilinear forms, so that both energies are preserved. As we have already shown, if $\Phi$ is orthonormal with respect to the $\CC{L}_2(\BB{R})$ inner product, then the $\CC{L}_2$ norm is conserved by a Galerkin scheme, but what if it is orthonormal with respect to the $\llangle\cdot,\cdot\rrangle_V$ sesquilinear form?

We write the solution as 
\begin{displaymath}
	u_N(x,t) = \sum_{n=0}^N a_n(t) \varphi_n(x).
\end{displaymath}
For a Galerkin scheme the right-hand-side is discretised by the matrix
\begin{Eqnarray*}
	L_{j,k} &=& \llangle -\varphi_j'' + V\cdot \varphi_j, \varphi_k, \rrangle_V \\
	&=& \int_{-\infty}^\infty  \left(-\varphi_j''' + V' \varphi_j + V \varphi_j'\right) \overline{\varphi_k'} +V \left(-\varphi_j'' + V\varphi_j \right)  \overline{\varphi_k}  \,\D  x \\
	&=& \int_{-\infty}^\infty  \varphi_j''\overline{\varphi_k''} + 2V\varphi_j' \overline{\varphi_k'}   + V'\left(\varphi_j \overline{\varphi_k'} + \varphi_j' \overline{\varphi_k} \right) + V^2\varphi_j \overline{\varphi_k}  \,  \D  x.
\end{Eqnarray*}
Evidently, this matrix is Hermitian, so the solution to the semidiscretisation, namely
\begin{displaymath}
	a_n(t) = \exp(\ii  t L) a_n(0),
\end{displaymath}
evolves unitarily i.e.~its vector 2-norm is preserved. Therefore,
\begin{Eqnarray*}
	\llangle u_N(\cdot,t),u_N(\cdot,t)\rrangle_V &=& \sum_{n=0}^N |a_n(t)|^2 \qquad \text{(since $\Phi$ is orthonormal w.r.t. $\llangle \cdot,\cdot\rrangle_V$)}\\
	&=& \sum_{n=0}^N |a_n(0)|^2 \qquad \text{ since the evolution is unitary on the coefficients}\\
	&=& \llangle u_N(\cdot,0),u_N(\cdot,0) \rrangle_V.
\end{Eqnarray*}
Hence, the Hamiltonian of the solution to the semi-discretised equation is preserved.

\subsection{Hamiltonian sesquilinear forms}

\subsubsection{Satisfaction of simultaneous invariants}

Recall that the linear Schr\"odinger equation, given as a Cauchy problem, has two important invariants, the $\CC{L}_2$ energy and the Hamiltonian energy \R{eq:4.1}. The following demonstates a fundamental limitation on the associated spectral bases.
\begin{theorem}\label{thm:simul_ortho_norm}
	Let $\Phi$ be a complete orthonormal basis for $\CC{L}_2(\BB{R})$. Then $\Phi$ cannot be orthonormal with respect to $\llangle \cdot, \cdot \rrangle_V$ for any function $V:\BB{R} \to \BB{R}$ that is continuous on some interval $[a,b]$.
	\end{theorem}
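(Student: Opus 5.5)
If $\Phi$ is orthonormal both in $\CC{L}_2(\BB{R})$ and with respect to $\llangle\cdot,\cdot\rrangle_V$, then the two forms agree on every finite linear combination of basis functions. Completeness should let us pass this identity to a large class of functions. We then contradict it with a single test function built from a bump and a fast oscillation.

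\emph{Step 1: the forms agree on the span.} Orthonormality in both senses gives $\llangle \varphi_m,\varphi_n\rrangle_V=\delta_{m,n}=\langle\varphi_m,\varphi_n\rangle$ for all $m,n$. By sesquilinearity, $\llangle f,f\rrangle_V=\|f\|^2_{\CC{L}_2}$ for every finite linear combination $f$ of the $\varphi_n$. Since $\Phi$ is a complete orthonormal basis of $\CC{L}_2(\BB{R})$, every $f\in\CC{L}_2(\BB{R})$ has the expansion $f=\sum \hat f_n\varphi_n$. Taking $\llangle\cdot,\cdot\rrangle_V$ of this expansion term by term formally gives
\begin{displaymath}
\int_{-\infty}^\infty [V|f|^2+|f'|^2]\D x=\int_{-\infty}^\infty |f|^2\D x
\end{displaymath}
for every smooth compactly supported $f$.

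\emph{Step 2: contradiction with an oscillatory test function.} Choose a nonzero real $\chi\in\CC{C}^\infty_0$ supported in $[a,b]$, where $V$ is continuous and hence bounded, say $|V|\le M$. Set $f_k(x)=\chi(x)\ee^{\ii kx}$. Then $|f_k|^2=\chi^2$ is independent of $k$, so the right-hand side and $\int V|f_k|^2$ are both constant in $k$. On the other hand,
\begin{displaymath}
\int|f_k'|^2\D x=\int (\chi'^2+k^2\chi^2)\D x\to\infty \quad (k\to\infty).
\end{displaymath}
So the identity from Step 1 fails for $k$ large, and $\Phi$ cannot be orthonormal for $\llangle\cdot,\cdot\rrangle_V$. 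Continuity of $V$ on $[a,b]$ is used only to make $\int V\chi^2$ finite.

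\emph{Main obstacle: justifying the passage to the limit in Step 1.} The form $\llangle\cdot,\cdot\rrangle_V$ is not continuous in the $\CC{L}_2$ norm, since it involves $f'$. So the $\CC{L}_2$-convergent expansion of $f_k$ cannot simply be inserted into $\llangle\cdot,\cdot\rrangle_V$.

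I would first try to avoid limits altogether. The Gram matrix of $\Phi$ under $\llangle\cdot,\cdot\rrangle_V$ equals the $\CC{L}_2$ Gram matrix, namely the identity. Consider the finite linear combinations $g_N=\sum_{n\le N}\hat f_n\varphi_n$ of a fixed smooth $f$. They are Cauchy in $\CC{L}_2$, and Step 1 gives $\llangle g_N-g_M,g_N-g_M\rrangle_V=\|g_N-g_M\|^2$, so they are also Cauchy in the $\llangle\cdot,\cdot\rrangle_V$ seminorm. When $V\ge -C$, this bounds $\|g_N'\|$, so $g_N'$ converges weakly in $\CC{L}_2$, necessarily to $f'$. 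Then $\int|f'|^2\le\liminf\int|g_N'|^2$, and weak lower semicontinuity gives
\begin{displaymath}
\llangle f,f\rrangle_V\le\|f\|^2+C'\|f\|^2.
\end{displaymath}
The right-hand side is a bound uniform in $k$, which is already enough to contradict $\int|f_k'|^2\to\infty$, since $|V|\le M$ on the support of $f_k$.

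If $V$ is unbounded below elsewhere, I would restrict to the local version of the argument, working only with the behaviour on $[a,b]$ and localising the bound there. I acknowledge this may need care. Failing that, I would take the paper's implicit convention that $\llangle\cdot,\cdot\rrangle_V$ extends by continuity to the completion, in which case the identity of Step 1 holds directly and the contradiction is immediate.
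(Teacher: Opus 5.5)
Your argument is correct to the same standard as the paper's, but it reaches the contradiction by a different route.

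\textbf{How the paper argues.} The paper shares your Step~1. In fact it simply asserts that $\llangle\cdot,\cdot\rrangle_V$ extends continuously to $\CC{L}_2(\BB{R})$ and equals $\langle\cdot,\cdot\rangle$ there. It then uses the \emph{bilinear} identity. For $f$ smooth and supported in $(a,b)$ and every test function $g$, it gets $-\langle f'',g\rangle=\langle(1-V)f,g\rangle$. Hence $f''=(V-1)f$ on $[a,b]$ with $f(a)=f'(a)=0$, and Picard--Lindel\"of forces $f\equiv0$.

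\textbf{What your route buys.} You use only the \emph{quadratic} identity along the family $\chi\ee^{\ii kx}$. This isolates the real obstruction: the identity would bound $\|f'\|$ by a multiple of $\|f\|$ for functions supported where $V$ is bounded. It needs no ODE uniqueness theorem, and continuity of $V$ enters only through $\int V\chi^2<\infty$.

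More substantively, you address the extension step that the paper glosses over. For $V\geq -C$, your bound $\|g'\|^2\leq(1+C)\|g\|^2$ on $\mathrm{span}\,\Phi$, combined with weak lower semicontinuity, is a complete proof with no convention needed. For $V$ unbounded below, your fallback is exactly the paper's unproved assertion, so you are no worse off than the published proof.

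\textbf{Closing the unbounded-below case.} If you want to handle that case too, put the regularity on the basis rather than on the form. Suppose $h_m:=-\varphi_m''+V\varphi_m\in\CC{L}_2(\BB{R})$ and integrating by parts against each $\varphi_n$ produces no boundary terms. Then $\langle h_m,\varphi_n\rangle=\delta_{m,n}=\langle\varphi_m,\varphi_n\rangle$ for all $n$, so completeness gives $h_m=\varphi_m$. Hence every restriction $\varphi_m|_{[a,b]}$ solves $y''=(V-1)y$, whose solution space is two-dimensional. This contradicts the density of these restrictions in $\CC{L}_2(a,b)$, which follows from completeness of $\Phi$ in $\CC{L}_2(\BB{R})$.
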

	\begin{proof}
Let us assume that $\Phi$ is orthonormal with respect to $\llangle \cdot, \cdot \rrangle_V$. Then $\langle \varphi_n, \varphi_m \rangle = \llangle \varphi_n, \varphi_m \rrangle_V$ for all $m,n$, so $\langle f, g \rangle = \llangle f, g \rrangle_V$ for all $f,g \in \CC{span}(\Phi)$. This implies that $\llangle \cdot, \cdot \rrangle_V$ is an inner product on $\CC{span}(\Phi)$ (because so is $\langle \cdot , \cdot \rangle$) and $\Phi$ is complete in $\CC{L}_2(\BB{R})$ in both inner products. This implies that $\llangle \cdot, \cdot\rrangle_V$ extends continuously to all of $\CC{L}_2(\BB{R})$ and $\llangle f, g\rrangle_V = \langle f,g\rangle$ for all $f,g \in \CC{L}_2(\BB{R})$. Let $f$ be a  smooth function supported in a closed subinterval of $(a,b)$. There are infinitely many such functions $f$. Then for any compactly supported smooth function $g$,
\begin{equation}
	-\langle f'',g\rangle = \langle f', g'\rangle = \llangle f,g\rrangle_V - \langle Vf,g\rangle = \langle (1-V)f,g\rangle.
\end{equation}
Therefore, $f''(x) = (V(x)-1)f(x)$ for all $x \in [a,b]$ and $f(a) = f'(a) = 0$.  Since $V$ is continuous in $[a,b]$, there is only one such $f$ by Picard--Lindel\"of theory, which is a contradiction.
\end{proof}

It is well known in {\em geometric numerical integration\/} that simultaneous conservation of distinct invariants is often impossible except by the exact solution \cite{ge88lph,hairer06gni}. Theorem~\ref{thm:simul_ortho_norm} fits into this pattern, as it implies that it is impossible for a Galerkin spectral method to unconditionally preserve both the $\CC{L}_2(\BB{R})$ norm and the Hamiltonian.

For the sake of greater generality now consider the simultaneous satisfaction of both $H(u)= (u,u)_V$ and of the Sobolev-like norm $\|u\|_v = \sqrt{\langle u,u\rangle_v}$, where
\begin{displaymath}
	\langle f,g\rangle_v =\sum_{k=0}^\infty v_k \int_{-\infty}^\infty f^{(k)}(x) \overline{g^{(k)}(x)}\D x,
\end{displaymath}
with $v_k\geq0$, $k\in\BB{Z}_+$, and $v(x)=\sum_{k=0}^\infty v_k x^{2k}>0$, $x\in\BB{R}$. (Of course, we assume that $v\not\equiv1$.) Note that $v(x)\equiv1$ corresponds to the $\CC{L}_2$ inner product, $v(x)=1+x^2$ to $\CC{H}^1$ etc.\@ and recall the material of Subsection~2.3 on T-systems orthogonal with respect to $\langle\,\cdot\,,\,\cdot\,\rangle_v$. 

\begin{theorem}\label{thm:almost_simul_ortho_norm}
  Suppose that $\Phi$ is a complete orthonormal basis for $\CC{H}_v^1(\BB{R})$ with respect to the inner product $\langle\,\cdot\,,\,\cdot\,\rangle_v$ and orthogonal with respect to the inner product $\llangle \cdot\,,\,\cdot \rrangle_V$,
  \begin{equation}
    \label{eq:4.3}
    \langle \varphi_m,\varphi_n\rangle_v=\delta_{m,n},\quad \llangle \varphi_m,\varphi_n \rrangle_V=\lambda_m\delta_{m,n},\qquad m,n\in\BB{Z}_+.
  \end{equation}
  Then each $\lambda_m$ is an eigenvalue and $\varphi_m$ a corresponding eigenfunction of 
  \begin{equation}
    \label{eq:4.4}
    -y''+Vy=\lambda\sum_{k=0}^\infty (-1)^k v_k y^{(2k)},\qquad x\in\BB{R},
  \end{equation}
  with Cauchy boundary conditions.
\end{theorem}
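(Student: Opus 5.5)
The plan is to mimic the proof of Theorem~\ref{thm:simul_ortho_norm}, using completeness to pass from orthogonality of basis elements to an identity valid against all test functions.

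First I fix $m$ and show that, for every $g$ in the span of $\Phi$,
\begin{displaymath}
  \llangle \varphi_m, g\rrangle_V=\lambda_m\langle \varphi_m,g\rangle_v .
\end{displaymath}
This follows directly from \R{eq:4.3}, since both sides are conjugate-linear in $g$ and agree on every $\varphi_n$.

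Next I extend the identity by density to all $g\in\CC{H}_v^1(\BB{R})$. The right-hand side is continuous in $g$ in the $\|\cdot\|_v$ norm. For the left-hand side I will argue that $g\mapsto\llangle\varphi_m,g\rrangle_V$ is a bounded antilinear functional on $\CC{H}_v$. This needs $\|g'\|_{\CC{L}_2}\le C\|g\|_v$, which holds whenever $v_0$ and $v_1$ are positive, or more generally when $v(\xi)\ge c(1+\xi^2)$; the name $\CC{H}_v^1$ presumably encodes exactly this. It also needs $V\varphi_m\in\CC{L}_2$, which I will assume as part of the standing regularity of $V$ and $\Phi$ (as was done implicitly in Theorem~\ref{thm:simul_ortho_norm}). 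In particular the identity holds for every compactly supported smooth $g$.

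For such $g$ I then integrate by parts on both sides, moving all derivatives onto $\varphi_m$. On the left, \R{eq:4.1} gives
\begin{displaymath}
  \llangle\varphi_m,g\rrangle_V=\int(-\varphi_m''+V\varphi_m)\overline g\,\D x .
\end{displaymath}
On the right, each term of $\langle\cdot,\cdot\rangle_v$ gives
\begin{displaymath}
  \int \varphi_m^{(k)}\overline{g^{(k)}}\,\D x=(-1)^k\int \varphi_m^{(2k)}\overline g\,\D x .
\end{displaymath}
There are no boundary terms because $g$ has compact support; this is precisely the IbP mechanism of Subsection~2.1. Since $g$ is an arbitrary test function, the fundamental lemma of the calculus of variations (or the distributional statement) yields \R{eq:4.4} with $y=\varphi_m$ and $\lambda=\lambda_m$.

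The Cauchy boundary conditions are inherited from $\varphi_m\in\CC{H}_v^1(\BB{R})$, that is, $\varphi_m$ and the relevant derivatives decay at $\pm\infty$. Finally, $\varphi_m\neq0$ because $\langle\varphi_m,\varphi_m\rangle_v=1$, so $\lambda_m$ is a genuine eigenvalue with eigenfunction $\varphi_m$.

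The main obstacle is the regularity bookkeeping rather than the algebra. When $v$ is an infinite series, $\sum_k(-1)^kv_k\varphi_m^{(2k)}$ must be interpreted as a distribution or as a Fourier multiplier $v(\xi)\hat\varphi_m$. I would handle this by working on the Fourier side throughout, where the identity becomes $(\xi^2\hat\varphi_m+\widehat{V\varphi_m})=\lambda_m v(\xi)\hat\varphi_m$ tested against Schwartz functions. This avoids termwise differentiation entirely, and also justifies the continuity claim via $\xi^2\le C v(\xi)$.
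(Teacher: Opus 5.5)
Your argument is sound, with one repair noted below. It runs the paper's two steps in the opposite order.

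The paper integrates by parts first, directly against the basis functions. This gives $\int r_m\overline{\varphi_n}\,\D x=0$ for every $n$, where $r_m=-\varphi_m''+V\varphi_m-\lambda_m\sum_k(-1)^kv_k\varphi_m^{(2k)}$ is the strong residual. Only then does it appeal to density of $\Phi$. You instead extend the weak identity $\llangle\varphi_m,g\rrangle_V=\lambda_m\langle\varphi_m,g\rangle_v$ from $\CC{span}(\Phi)$ to all of $\CC{H}_v^1(\BB{R})$ by continuity. You integrate by parts only at the end, against test functions.

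The paper's version is shorter, but it makes tacit demands. Its integrations by parts need enough smoothness and decay of $\varphi_m,\varphi_n$ to discard boundary terms at $\pm\infty$ after $2k$ steps, and to make sense of the series defining $r_m$. Its closing sentence needs $g\mapsto\int r_m\overline g\,\D x$ to be a bounded functional on $\CC{H}_v$. That is exactly the boundedness of $g\mapsto\llangle\varphi_m,g\rrangle_V$ which you state explicitly.

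Your ordering buys three things:
\begin{enumerate}
\item it never meets boundary terms at infinity;
\item it uses only $\varphi_m\in\CC{H}_v$ and $V\varphi_m\in\CC{L}_2(\BB{R})$;
\item it delivers \R{eq:4.4} in the weak (Fourier-multiplier) sense, which is the natural reading when $v$ is an infinite series.
\end{enumerate}

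The repair concerns your choice of test functions. Your extended identity holds only for $g\in\CC{H}_v$. When $v$ is a genuine infinite series, neither compactly supported smooth functions nor Schwartz functions need belong to that space.

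For instance, take $v(\xi)=\cosh\xi$, i.e.\ $v_k=1/(2k)!$. It satisfies $v(\xi)\geq c(1+\xi^2)$. Yet every $g\in\CC{H}_v$ has $\int\ee^{|\xi|}|\hat g(\xi)|^2\D\xi<\infty$, so $g$ extends analytically to a strip about $\BB{R}$. Hence $\CC{H}_v$ contains no nonzero compactly supported function. Likewise $\hat g(\xi)=\exp(-(1+\xi^2)^{1/4})$ gives a Schwartz function outside $\CC{H}_v$.

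The remedy is to test against band-limited functions, $\hat g\in\CC{C}_c^\infty(\BB{R})$. These lie in $\CC{H}_v$ for every admissible $v$. Since $\xi^2\hat\varphi_m+\widehat{V\varphi_m}-\lambda_m v\hat\varphi_m$ is locally integrable, its vanishing against all such $\hat g$ gives the Fourier-side form of \R{eq:4.4} almost everywhere. For polynomial $v$, which includes the Sobolev spaces $\CC{H}^p$, your argument works exactly as written.
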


\begin{proof}
  Let $m,n\in\BB{Z}_+$. Repeatedly integrating by parts,
  \begin{Eqnarray*}
   \llangle \varphi_m,\varphi_n \rrangle_V&=&\int_{-\infty}^\infty [V(x)\varphi_m(x)-\varphi_m''(x)]\overline{\varphi_n(x)}\D x,\\
    \langle \varphi_m,\varphi_n\rangle_v&=&\sum_{k=0}^\infty (-1)^k v_k \int_{-\infty}^\infty \varphi_m^{(2k)}(x) \overline{\varphi_n(x)}\D x.
  \end{Eqnarray*}
  It now follows from \R{eq:4.3} that
  \begin{displaymath}
    \int_{-\infty}^\infty \left[-\varphi_m''(x)+V(x)\varphi_m(x) -\lambda_m \sum_{k=0}^\infty (-1)^k v_k  \varphi_m^{(2k)}(x)\right] \overline{\varphi_n(x)}\D x=0.
  \end{displaymath}
  The theorem follows because $\Phi$ is dense in $\CC{H}_v^1(\BB{R})$.
\end{proof}

Note that, if the Sobolev-like inner product is just the familiar $\CC{L}_2(\BB{R})$ inner product, \R{eq:4.4} reduces to the Sturm--Liouville eigenvalue problem
\begin{equation}
  \label{eq:4.5}
  -y''+Vy=\lambda y,\qquad x\in\BB{R}.
\end{equation}
Being defined on the entire real line, the Sturm--Liouville problem \R{eq:4.5} has a point spectrum if $V$ is real summable in every compact subinterval of $\BB{R}$ \cite{marchenko86slo}. However, this point spectrum is not necessarily complete in $\CC{L}_2(\BB{R})$. A sufficient condition for completeness is that $V$ be locally integrable and $V(x) \to +\infty$ as $|x| \to \infty$ \cite[Thm XI11.67]{reed1978iv}.

\subsubsection{Example: Harmonic potential}

The  example of the {\em harmonic potential\/} $V(x)=x^2$ is indicative of a more general state of affairs which has been the subject of Theorem~\ref{thm:almost_simul_ortho_norm}. Our point of departure are  Hermite functions \R{eq:3.3} and we wish to compute $\rho_{m,n}=\llangle \varphi_m,\varphi_n \rrangle_{x^2}$. Since Hermite functions obey \R{eq:2.5} with $b_n=\sqrt{(n+1)/2}$ and $c_n\equiv0$, we have, by virtue of orthonormality,
\begin{Eqnarray*}
  &&\int_{-\infty}^\infty \varphi_m'(x)\varphi_n'(x)\D x\\
  &=&\frac{\sqrt{mn}}{2} \int_{-\infty}^\infty \varphi_{m-1}(x)\varphi_{n-1}(x)\D x-\frac{\sqrt{m(n+1)}}{2} \int_{-\infty}^\infty \varphi_{m-1}(x)\varphi_{n+1}(x)\D x\\
  &&\hspace*{20pt}\mbox{}-\frac{\sqrt{(m+1)n}}{2} \int_{-\infty}^\infty \varphi_{m+1}(x)\varphi_{n-1}(x)\D x\\
  &&\hspace*{20pt}\mbox{}+\frac{\sqrt{(m+1)(n+1)}}{2} \int_{-\infty}^\infty \varphi_{m+1}(x)\varphi_{n+1}(x)\D x\\
  &=&\left[\frac{\sqrt{mn}}{2}+\frac{\sqrt{(m+1)(n+1)}}{2}\right]\!\delta_{m,n}-\frac{\sqrt{m(n+1)}}{2}\delta_{m-1,n+1}-\frac{\sqrt{(m+1)n}}{2}\delta_{m+1,n-1}.
\end{Eqnarray*}
The standard three-term recurrence relation for orthonormal Hermite polynomials implies that
\begin{displaymath}
  x\varphi_n(x)=\frac{\sqrt{n}}{2}\varphi_{n-1}(x)+\sqrt{\frac{n+1}{2}} \varphi_{n+1}(x),
\end{displaymath}
therefore
\begin{Eqnarray*}
  &&\int_{-\infty}^\infty x^2\varphi_m(x)\varphi_n(x)\D x\\
  &=&\frac{\sqrt{mn}}{2} \int_{-\infty}^\infty \varphi_{m-1}(x)\varphi_{n-1}(x)\D x+\frac{\sqrt{m(n+1)}}{2} \int_{-\infty}^\infty \varphi_{m-1}(x)\varphi_{n+1}(x)\D x\\
  &&\mbox{}+\frac{\sqrt{(m+1)n}}{2}\int_{-\infty}^\infty \varphi_{m+1}(x)\varphi_{n-1}(x)\D x\\
  &&\hspace*{20pt}\mbox{}+\frac{\sqrt{(m+1)(n+1)}}{2}\int_{-\infty}^\infty \varphi_{m+1}(x)\varphi_{n+1}(x)\D x\\
  &=&\left[\frac{\sqrt{mn}}{2}+\frac{\sqrt{(m+1)(n+1)}}{2}\right]\!\delta_{m,n}+\frac{\sqrt{m(n+1)}}{2}\delta_{m-1,n+1}+\frac{\sqrt{(m+1)n}}{2}\delta_{m+1,n-1}.
\end{Eqnarray*}
Consequently,
\begin{equation}
  \label{eq:4.6}
 \llangle \varphi_m,\varphi_n \rrangle_{x^2}=[\sqrt{mn}+\sqrt{(m+1)(n+1)}]\delta_{m,n}=(2n+1)\delta_{m,n},\qquad m,n\in\BB{Z}_+.
\end{equation}
Thus, $\Phi$ is orthogonal with respect to the Hamiltonian sesquilinear form but it is not normalised. This is consistent with Theorem~\ref{thm:simul_ortho_norm}. Note that \R{eq:4.6} is not just a confirmation of \R{eq:4.5} in this setting but also a reflection of the well known fact that the $\varphi_n$s are eigenfunctions of the linear Schr\"odinger equation with the harmonic potential $V(x)=x^2$.

Of course, it is easy to orthonormalise $\Phi$ with respect to the Hamiltonian sesquilinear form, setting $\psi_n(x)=\varphi_n(x)/\sqrt{2n+1}$. Alas, $\{\psi_n\}_{n\in\bb{Z}_+}$ is no longer orthonormal with respect to the $\CC{L}_2(\BB{R})$ norm and, while its differentiation matrix is tridiagonal,
\begin{displaymath}
  \psi_n'(x)=\sqrt{\frac{n(2n-1)}{2(2n+1)}}\psi_{n-1}(x)-\sqrt{\frac{(n+1)(2n+3)}{2(2n+1)}}\psi_{n+1}(x),\qquad n\in\BB{Z}_+,
\end{displaymath}
it is no longer skew symmetric. 

Spectra of Schr\"odinger operators are known explicitly in few settings, cf.\ for example \cite{ishkhanyan18spo}. For example, the eigenfunctions corresponding to $V(x)=x^4$ can be expressed in terms of {\em Heun functions.\/} In our case, however, the knowledge of the spectrum is just one requirement, the other is that the eigenfunctions form (subject to proper renormalisation) a T-system. Even if, like in the case of the harmonic potential, both conditions are satisfied, Theorem~\ref{thm:simul_ortho_norm} demonstrates that $\Phi$ cannot be simultaneously orthonormal with respect to the two sesquilinear forms. Moreover, there is no reason why the differentiation matrix with respect to eigenfunctions of a differential operator should be tridiagonal, hence T-system conditions in general are not satisfied.

\subsection{Differential Krylov subspaces revisited}

Let $\llangle \cdot, \cdot\rrangle$ be a sesquilinear form. We consider its differentiation matrix $\DDD=(d_{m,n})_{m,n\in\bb{Z}_+}$. In general we cannot expect $\DDD$ to be a tridiagonal matrix and $\llangle \cdot, \cdot\rrangle$ does not have the IbP property unless $V$ is a constant function. Hence the assumptions of the differential Lanczos algorithm do not necesssarily hold. Instead, we should consider an analogue of Arnoldi's algorithm, in which an orthonormal basis of the space is generated so that the differential operator (in place of a square matrix in the classical case) is represented by an upper Hessenberg matrix.

\begin{algorithm}
	\begin{algorithmic}[1]
		\Require  A seed function $\varphi_0$ and a Hermitian sequilinear form $\llangle \cdot,\cdot \rrangle$ on $\mathrm{span}(\{\varphi_0^{(n)}\}_{n\in \bb{Z}_+})$ such that $\llangle \varphi_0, \varphi_0\rrangle = 1$.
		\Ensure $\Phi = \{ \varphi_n\}_{n=0}^\infty$, differentiation matrix $\DDD=(D_{m,n})_{m,n\in\bb{Z}_+}$.
		\For{$n = 0,1,2,\ldots $}
		\State $\psi_{n+1} = \varphi_n'$
		\For{$k = 0,\ldots,n$}
		 \State $D_{k,n} = \llangle \psi_{n+1}, \varphi_k \rrangle$
		 \State $\psi_{n+1} = \psi_{n+1} - D_{k,n} \varphi_k$
		 \EndFor
		\State $D_{n+1,n} = \left| \llangle \psi_{n+1},\psi_{n+1} \rrangle\right|^{1/2}$
		\State $\varphi_{n+1} = D_{n+1,n}^{-1}\psi_{n+1}$
		\EndFor
	\end{algorithmic}\caption{The Differential Arnoldi Algorithm}\label{alg:arnoldi}
\end{algorithm}

\begin{theorem}\label{thm:arnoldi_works}
If $D_{n+1,n} > 0$ for all $n$, then Algorithm \ref{alg:arnoldi} generates $\Phi=\{\varphi_n\}_{n=0}^
	\infty$, and upper Hessenberg matrix $\DDD$ (with elements $D_{k,j}$) such that for all $n = 1,2,\ldots$,
	\begin{enumerate}[(i)]
		\item $\{ \varphi_0, \ldots, \varphi_{n}\}$ is an orthonormal basis for $\mathcal{K}_{n}(\varphi_0)$ with respect to $\llangle\cdot,\cdot \rrangle$
		\item $ \varphi_{n-1}'(x) =\sum_{k=0}^n D_{k, n-1}\varphi_{k}(x)$.
	\end{enumerate}
\end{theorem}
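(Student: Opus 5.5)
We argue by induction on $n$, in the same way as the proof of Theorem~\ref{thm:lanczos_works}, except that orthogonality is now enforced explicitly by modified Gram--Schmidt and not through the IbP property. Here the form $\llangle\cdot,\cdot\rrangle$ plays the role of the inner product. As in Theorem~\ref{thm:lanczos_works}, we read $\mathcal{K}_n(\varphi_0)$ in (i) as $\mathrm{span}\{\varphi_0,\ldots,\varphi_0^{(n)}\}$, which has dimension $n+1$.

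\emph{Base case $n=1$.} We have $\psi_1=\varphi_0'-D_{0,0}\varphi_0$ with $D_{0,0}=\llangle\varphi_0',\varphi_0\rrangle$. Because $\llangle\varphi_0,\varphi_0\rrangle=1$, this gives $\llangle\psi_1,\varphi_0\rrangle=0$. The hypothesis $D_{1,0}>0$ makes $\varphi_1=D_{1,0}^{-1}\psi_1$ well defined. Rearranging gives (ii) for $n=1$: $\varphi_0'=D_{0,0}\varphi_0+D_{1,0}\varphi_1$. Since $\varphi_1\in\mathrm{span}\{\varphi_0,\varphi_0'\}$ and $\varphi_1$ is orthogonal to $\varphi_0$, the set $\{\varphi_0,\varphi_1\}$ is a basis of that span.

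\emph{Inductive step.} Assume (i) and (ii) hold up to $n$. By line 2 and the inner loop,
\[
\psi_{n+1}=\varphi_n'-\sum_{k=0}^n D_{k,n}\varphi_k .
\]
We must check that the coefficients $D_{k,n}=\llangle\psi^{(k)},\varphi_k\rrangle$, computed from the partially updated vector $\psi^{(k)}$, are the same as $\llangle\varphi_n',\varphi_k\rrangle$. This holds because $\psi^{(k)}$ differs from $\varphi_n'$ by a combination of $\varphi_0,\ldots,\varphi_{k-1}$, and these are orthogonal to $\varphi_k$ by (i). For the same reason, $\llangle\psi_{n+1},\varphi_j\rrangle=\llangle\varphi_n',\varphi_j\rrangle-D_{j,n}=0$ for every $j\le n$.

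Next, $D_{n+1,n}>0$ by hypothesis, so $\varphi_{n+1}$ is defined. To obtain $\llangle\varphi_{n+1},\varphi_{n+1}\rrangle=1$ we need $\llangle\psi_{n+1},\psi_{n+1}\rrangle$ to be positive. I expect this to be the main obstacle. For an indefinite form the algorithm as written only gives $\pm1$ on the diagonal. So either we assume positivity on the Krylov space, or we interpret "orthonormal" with signs. I would state this caveat and otherwise proceed assuming the form is positive there.

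Rearranging the definition of $\psi_{n+1}$ gives (ii) at $n+1$. This also shows that $\DDD$ is upper Hessenberg, since $D_{k,n}=0$ for $k>n+1$. Finally, (ii) and the inductive hypothesis give $\varphi_n'\in\mathrm{span}\{\varphi_0,\ldots,\varphi_0^{(n+1)}\}$, hence $\varphi_{n+1}$ lies in this span. Conversely, by differentiating the representation of $\varphi_0^{(n)}$ in terms of $\varphi_0,\ldots,\varphi_n$, we get $\varphi_0^{(n+1)}\in\mathrm{span}\{\varphi_0,\ldots,\varphi_{n+1}\}$. Mutual orthogonality together with the nonzero norms gives linear independence. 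Therefore $\{\varphi_0,\ldots,\varphi_{n+1}\}$ is a basis, and this establishes (i).
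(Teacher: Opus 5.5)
Your proof follows the paper's induction essentially step for step: the modified Gram--Schmidt coefficients equal $\llangle\varphi_n',\varphi_k\rrangle$, hence $\psi_{n+1}$ is orthogonal to $\varphi_0,\ldots,\varphi_n$, and rearranging gives (ii). Your positivity caveat is well founded, because $D_{n+1,n}>0$ only guarantees $\llangle\psi_{n+1},\psi_{n+1}\rrangle\neq0$, while the paper's ``normalized by line 8'' tacitly needs it to be positive.

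Only your first remedy works, however. Reading ``orthonormal'' with signs $\pm1$ does not rescue the algorithm as written: line~5 subtracts $D_{k,n}\varphi_k$ without dividing by $\llangle\varphi_k,\varphi_k\rrangle$. So whenever $\llangle\varphi_k,\varphi_k\rrangle=-1$, that update leaves $\llangle\psi_{n+1},\varphi_k\rrangle=2D_{k,n}$ instead of $0$, and orthogonality is lost.
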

\begin{proof}
	Let us prove the result by induction. We start with the base case $n = 1$. We have by lines 2 to 6,
	\begin{equation}
		\psi_1 = \varphi_0' - D_{0,0}\varphi_0.
	\end{equation} 
	This function is not identically zero because $\varphi_0' \in \mathcal{K}_1(\varphi_0) \setminus \mathcal{K}_0(\varphi_0)$ by Definition \ref{def:seed}, and $\varphi_0 \in \mathcal{K}_0(\varphi_0)$. We have assumed that $D_{1,0}$ is non-zero, so $\varphi_1$ is well-defined and normalized by line 8 of the algorithm. Furthermore, property $(ii)$ is satisfied by combining lines 2 to 8 of the algorithm. Finally, $\varphi_1$ is orthogonal to $\varphi_0$ because:
	\begin{Eqnarray*}
		\llangle \varphi_1, \varphi_0 \rrangle &=& D_{1,0}^{-1} \llangle \varphi_0' -D_{0,0} \varphi_0, \varphi_0 \rrangle \\
		&=& D_{0,0}^{-1} \left( \llangle \varphi_0', \varphi_0\rrangle - D_{0,0} \right),
	\end{Eqnarray*}
which is zero by line 4 of the algorithm.
	
	Now we assume that properties $(i)$ and $(ii)$ hold for a given $n$, and we wish to show it holds when $\varphi_{n+1}$ is generated by Algorithm \ref{alg:arnoldi}. Just as in the base case, we have that $\psi_{n+1}$ is not identically zero, because $\varphi_n' \in \mathcal{K}_{n+1}(\varphi_0) \setminus \mathcal{K}_n(\varphi_0)$ by Definition \ref{def:seed}, and lines 4 and 5 only subtract off elements of $\mathcal{K}_n(\varphi_0)$, by the inductive hypothesis and line 8 of the algorithm. We have assumed that $D_{n+1,n}$ is non-zero, so $\varphi_{n+1}$ is well-defined and normalized. Lines 2 to 8 imply that $D_{n,n-1} \varphi_n = \varphi_{n-1}' - \sum_{k=0}^{n-1} D_{k,n-1} \varphi_k$, so property $(ii)$ is satisfied.
	
	To complete the proof, we need to show that $\psi_{n+1}$ is orthogonal to $\mathcal{K}_n(\varphi_0)$, for then $\{\varphi_0, \varphi_1,\ldots,\varphi_{n+1}\}$ forms an orthonormal basis for $\mathcal{K}_{n+1}(\varphi_0)$ by combining this fact with the inductive hypothesis. We first need to show that $D_{k,n} = \llangle \varphi_n',\varphi_k\rrangle$ for any $0 \leq k \leq n$. This follows from line 4 of the algorithm combined with the orthonormality of $\{\varphi_0,\ldots,\varphi_n\}$. Now, for $j = 0,1,\ldots,n$,
	\begin{Eqnarray*}
		\llangle \psi_{n+1}, \varphi_{j} \rrangle &=& \llangle \varphi_n' - \sum_{k=0}^n D_{k,n} \varphi_k, \varphi_{j}\rrangle \\
		&=&  \llangle \varphi_n', \varphi_j  \rrangle - \sum_{k=0}^n D_{k,n} \llangle \varphi_k,\varphi_j \rrangle  \\
		&=& \llangle \varphi_n', \varphi_j  \rrangle - D_{j,n},
	\end{Eqnarray*}
	by the orthonormality of $\{\varphi_0,\ldots,\varphi_n\}$.  The fact that $D_{j,n} = \llangle \varphi_n',\varphi_j\rrangle$ completes the proof.
\end{proof}

\subsection{Example: an H-system for a quartic potential}

Let us consider the case $V(x) = x^4$ and seed function proportional to the Gaussian $\phi(x) = \ee^{-x^2 / 2}$. We start by normalising $\phi$ with respect to the inner product $\llangle \cdot, \cdot \rrangle_V$:
\begin{equation}
   \|\phi\|_V^2 = \int_{-\infty}^\infty [x^4 \phi(x)^2 + \phi'(x)^2]  \D x = \int_{-\infty}^\infty (x^4 +x^2) \ee^{-x^2} \, \mathrm{d} x = \frac{5}{4}\pi^{1/2},
\end{equation}
so
\begin{equation}
	\varphi_0(x) = \alpha_0 \ee^{-x^2/2} = \frac{2}{5^{1/2}\pi^{1/4}} \ee^{-x^2/2}.
\end{equation}
Then, $\psi_1(x) = \varphi_0'(x) = - \alpha_0 x \ee^{-x^2/2}$. Since $\phi_0$ is even and $\psi_1$ is odd, we have $D_{0,0} = \llangle \psi_1,\varphi_0\rrangle_V = 0$. Now we normalise $\psi_1$ to obtain $\varphi_1$ (we spare you the details):
\begin{equation}
 \varphi_1(x) = \alpha_1 x \ee^{-x^2/2} = -2\frac{2^{1/2}}{21^{1/2} \pi^{1/4}} x \ee^{-x^2/2},
\end{equation}
which came from $D_{1,0} = \|\psi_1\|_V = 21^{1/2}/10^{1/2}$. The next basis function is where we see the non-skew-Hermitian nature of the differentation matrix. We have $\psi_2(x) = \varphi_1'(x) = \alpha_1 (1-x^2) \ee^{-x^2/2}$. We can compute $D_{1,1} = \llangle \psi_2,\varphi_1 \rrangle_V = 0$ by parity, and 
\begin{eqnarray*}
D_{0,1} &=& \llangle \psi_2, \varphi_0\rrangle_V \\
              &=& \int_{-\infty}^\infty \alpha_1 \alpha_0 x^4(1-x^2) \ee^{-x^2} + \left( \alpha_1(x^3 - 3x) \right)\left(-\alpha_0 x \ee^{-x^2/2}\right)  \D x \\
              &=& \frac{3}{210^{1/2}}.
\end{eqnarray*}
Indeed, we see that $D_{0,1} \neq D_{1,0}$. The subsequent basis functions are odd and even polynomials multiplied by $\ee^{-x^2 / 2}$. We plot the first six basis functions (generated using a symbolic implementation of Algorithm \ref{alg:arnoldi} in Julia) in Figure \ref{fig:quarticH}.
\begin{figure}[htb]
	\begin{center}
\includegraphics[width=.7\textwidth]{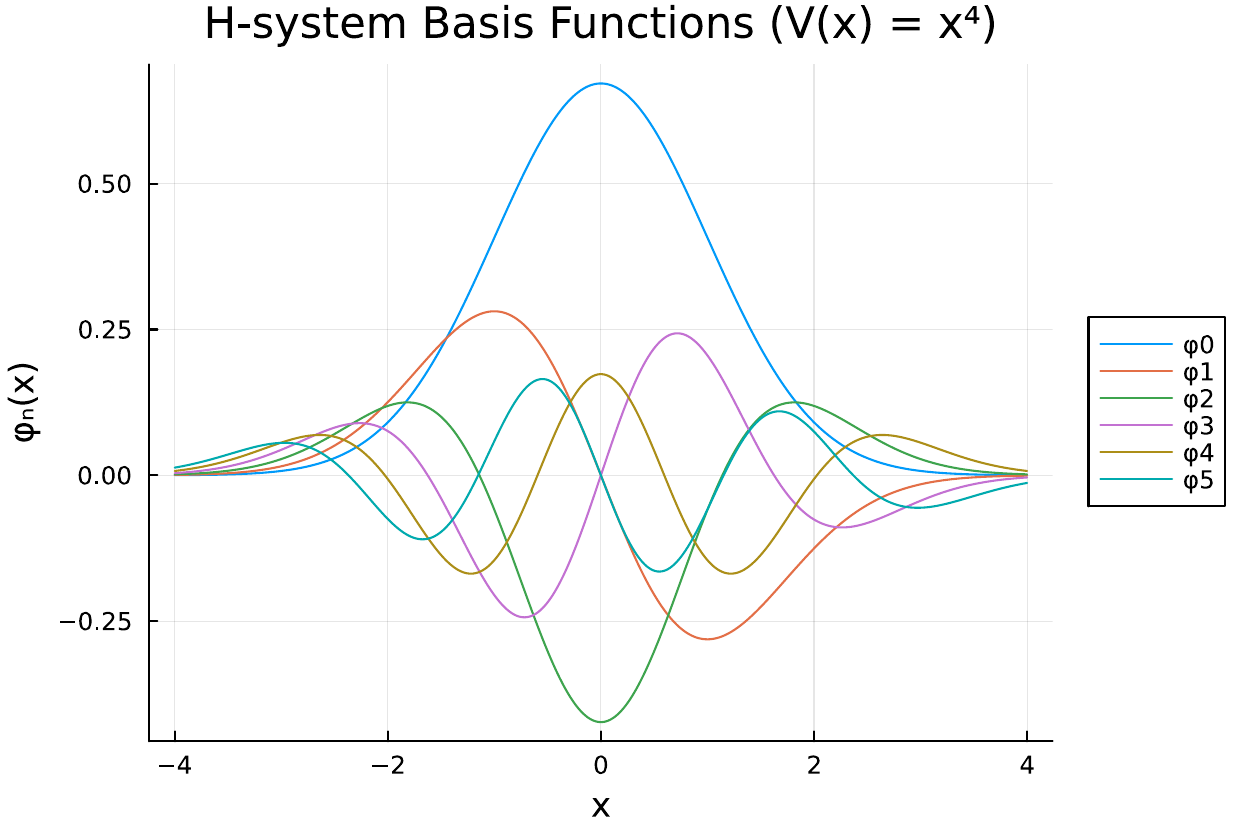}
		\caption{The H-system $\varphi_0,\ldots,\varphi_5$ corresponding to $V(x) = x^4$ and seed function a suitable multiple of $\ee^{-x^2/2}$, generated by the differential Arnoldi algorithm (Algorithm \ref{alg:arnoldi}).}
		\label{fig:quarticH}
	\end{center}
\end{figure}

\subsection{When are H-systems almost-T-systems?}

Of course, $\DDD$ is in general neither skew-symmetric nor tridiagonal. However, an intriguing computational observation is that it misses on both properties by a very small margin. Fig.~\ref{fig:4.1} displays a bar chart of size of the elements of the matrix $|\DDD|=(|D_{m,n}|)_{m,n=0,\ldots,16}$ for four different functions $V$ (that define a sesquilinear form $\llangle \cdot, \cdot \rrangle_V$), the seed function a multiple of $\ee^{-x^2 / 2}$. The most prominent feature are the two leading off-diagonals. (The diagonal is of course zero.) While behind the super-diagonal all elements are zero (the matrix is lower Hessenberg), the terms under the sub-diagonal are very small. In other words, while $|D_{n,n+1}|$ and $|D_{n+1,n}|$ are relatively large (and, although this is not apparent from Fig.~\ref{fig:4.1}, $|D_{n,n+1}+D_{n+1,n}|$ is small), the terms $|D_{m,n}|$ are small for $n\leq m-2$, hardly noticeable to a naked eye. The practical implication of this observation (which, as things stand, is just an observation, with neither proof or even a decent hand-waiving explanation) is that, although the system $\Phi$, orthogonal with respect to the bilinear product \R{eq:4.1}, is neither orthogonal in the $\CC{L}_2(\BB{R})$ inner product nor a T-system, it fails on both counts only slightly.

\begin{figure}[htb]
  \begin{center}
    \hspace*{-20pt}\includegraphics[width=220pt,height=200pt]{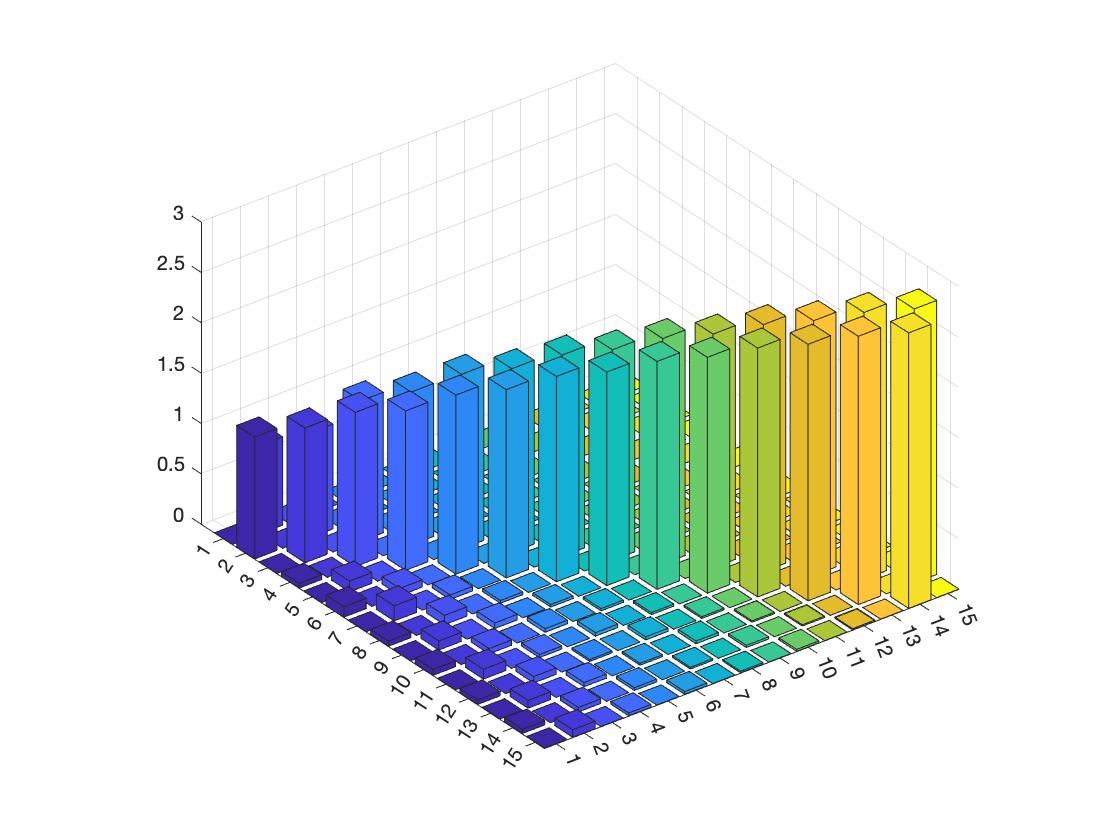}\hspace*{-30pt}\includegraphics[width=220pt,height=200pt]{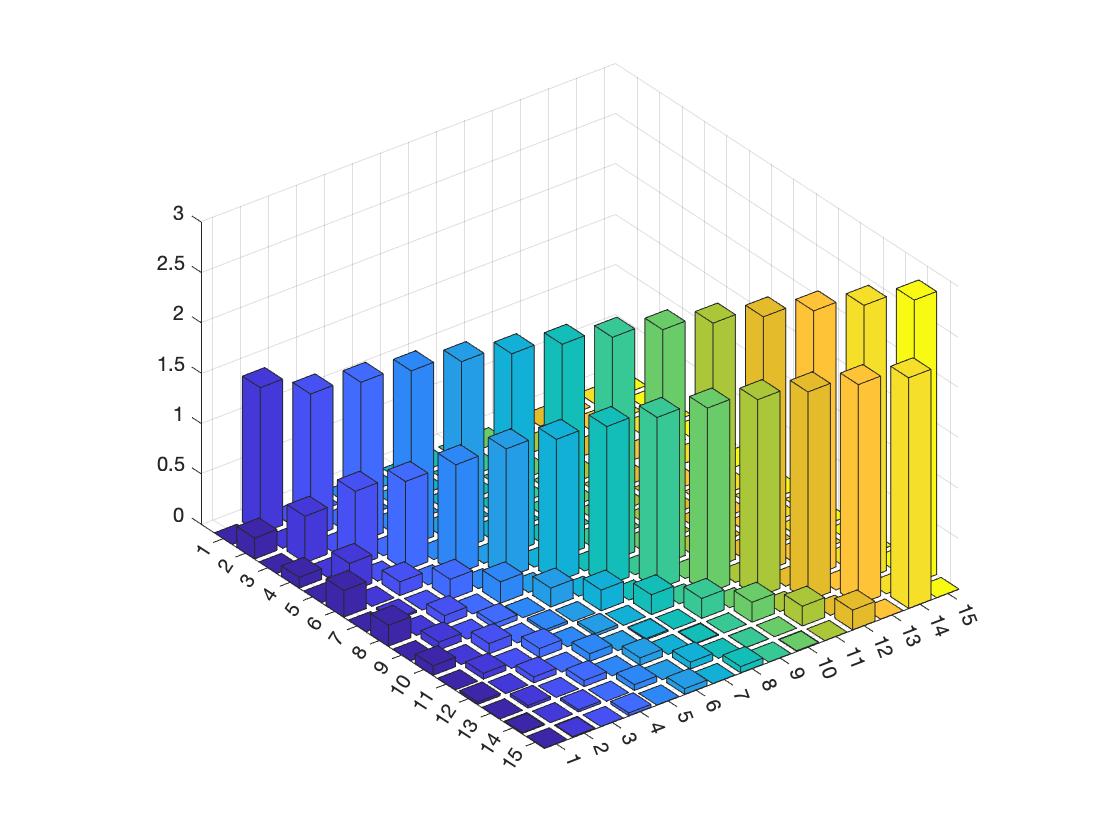}
   
    \vspace*{15pt}
    \hspace*{-20pt}\includegraphics[width=220pt,height=200pt]{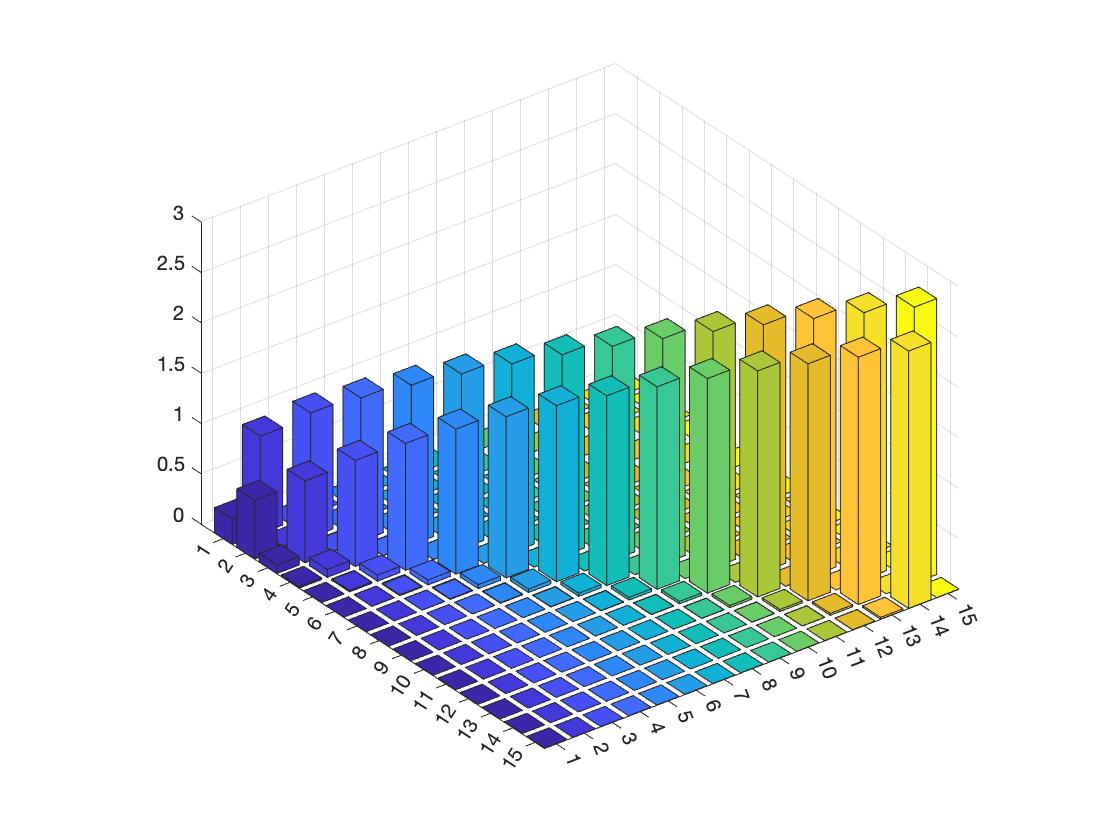}\hspace*{-30pt}\includegraphics[width=220pt,height=200pt]{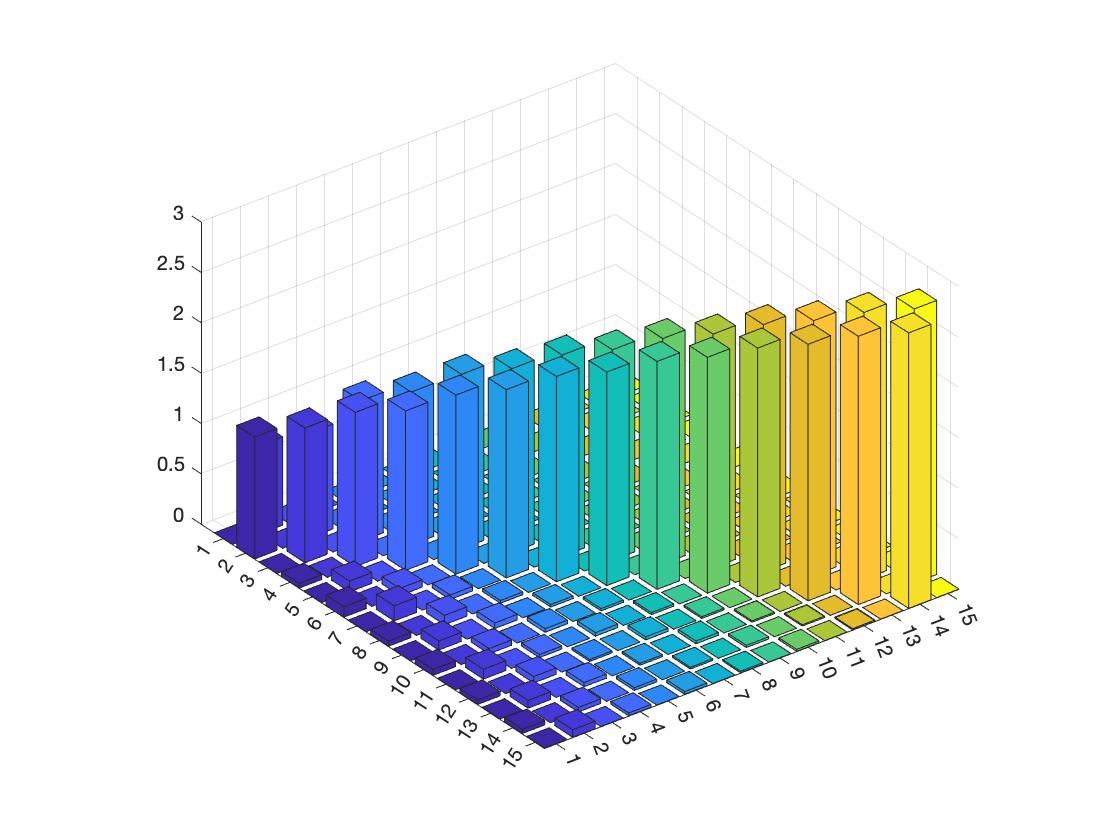}
    \begin{picture}(0,0)
      \put (-115,425) {$V(x)=\ee^{-x^2}$}
      \put (80,425) {$V(x)=x^4$}
      \put (-125,210) {$V(x)=1+x+x^2$}
      \put (70,210) {$V(x)=x^4-2x$}
    \end{picture}
    \caption{A two dimensional bar chart of $|\DDD|$ for four different potentials $V$, starting from a Gaussian seed function.}
    \label{fig:4.1}
  \end{center}
\end{figure}

A theoretical conformation of the observation following from Fig.~\ref{fig:4.1} is unavailable, neither is an understanding of its practical implications. Both are a matter for future research.

\section{Conclusion}

We have laid down a concrete theory of orthonormal functions that satisfy a tridiagonal differentiation matrix, which we call T-systems. This begins with characterisations in terms of the Fourier Transform, that the present authors developed over several papers \cite{iserles19oss} \cite{iserles20for}, \cite{iserles21fco}, \cite{iserles22awp}, \cite{iserles23sos}, and then leads to some new observations. 

First, T-systems can be generated using a version of the Lanzcos algorithm, which finds an orthogonal basis tridiagonalising a symmetric matrix, to the differential operator. There are two consequences to this: new T-systems can be generated algorithmically from a smooth seed function without the need for an appropriate differentiation matrixt that would make the system orthogonal and without the Fourier transform; also, the analysis of the approximation properties T-systems may now be tackled using the theory of Krylov subspace approximation! 

Second, we began to develop a methodology around spectral methods that preserve a Hamiltonian form. We showed in section 4.1 that a sufficient condition for this property is to ensure that the basis is orthonormal with respect to the associated sesquilinear form. It turns out that we can indeed generate bases with this kind of orthonormality, but we must use a differential analogue of the Arnoldi Algorithm instead of the Lanczos algorithm (because the differential operator is not necessarily skew-selfadjoint with respect to the Hamiltonian form), and the resulting differentiation matrix is upper-Hessenberg instead of skew-symmetric tridiagonal. We will call these systems H-systems.

We did not investigate H-systems in detail, but there appear to be some interesting properties worthy of investigation. In particular, we found that the differentiation matrix is very close to skew-symmetric tridiagonal, the reasons for which we leave as an open problem.

\bibliographystyle{agsm}
\bibliography{refs}

\end{document}